\definecolor{labelkey}{rgb}{0,0.08,0.45}
\definecolor{refkey}{rgb}{0,0.6,0.0}
\definecolor{Brown}{rgb}{0.45,0.0,0.05}
\definecolor{dgreen}{rgb}{0.00,0.49,0.00}
\definecolor{dblue}{rgb}{0,0.08,0.75}
\providecommand{\nor}[1]{\left\lVert {#1} \right\rVert}
\def\argmin{\operatornamewithlimits{arg\,min}}
\providecommand{\scalT}[2]{\left\langle{#1},{#2}\right\rangle}
\newcommand{\Nystrom}{\text{Nystr\"om}}
\def\argmin{\operatornamewithlimits{arg\,min}}
\DeclareMathOperator{\E}{\mathbb{E}}
\newtheorem{theorem}{Theorem}
\newtheorem{corollary}{Corollary}
\newtheorem{lemma}{Lemma}
\newtheorem{proposition}{Proposition}
\newtheorem{remark}{Remark}
\newtheorem{assumption}{Assumption}
\providecommand{\nor}[1]{\left\lVert {#1} \right\rVert}
\providecommand{\scalT}[2]{\left\langle{#1},{#2}\right\rangle}
\newcommand{\R}{\mathbb R}
\newcommand{\N}{\mathbb N}
\title{{\sffamily Stochastic Zeroth Order Descent with Structured Directions}}
\author{Marco Rando\thanks{Malga - DIBRIS, University of Genova, IT
		({\tt marco.rando@edu.unige.it}, {\tt lorenzo.rosasco@unige.it}).}
	\and Cesare Molinari\thanks{MaLGa - DIMA, University of Genova, Italy 
		({\tt molinari@dima.unige.it}, {\tt silvia.villa@unige.it}).}
	\and Silvia Villa\footnotemark[2]
	\and Lorenzo Rosasco\footnotemark[1] \thanks{Istituto Italiano di Tecnologia, Genova, Italy and CBMM - MIT, Cambridge, MA, USA}
}
\DeclareMathOperator{\diag}{diag}
\date{}
\begin{document}

\maketitle

\begin{abstract}
\noindent We introduce and analyze Structured Stochastic Zeroth order Descent (S-SZD), a finite difference approach that approximates a stochastic gradient on a set of $l\leq d$ orthogonal directions, where $d$ is the dimension of the ambient space. These directions are randomly chosen and may change at each step. For smooth convex functions we prove almost sure convergence of the iterates and a convergence rate on the function values of the form $O( (d/l) k^{-c})$ for every $c<1/2$, which is arbitrarily close to the one of Stochastic Gradient Descent (SGD) in terms of number of iterations \cite{garrigos2023handbook}. Our bound  shows the benefits of using $l$ multiple directions instead of one. For non-convex functions satisfying the Polyak-{\L}ojasiewicz condition, we establish the first convergence rates for stochastic structured zeroth order algorithms under such an assumption. We corroborate our theoretical findings with numerical simulations where the assumptions are satisfied and on the real-world problem of hyper-parameter optimization in machine learning, achieving competitive practical performance.    
\end{abstract}

\vspace{1ex}
\noindent
{\bf\small Keywords.} {\small zeroth-order optimization, finite differences, black-box optimization, derivative-free optimization, stochastic optimization, convex optimization}\\[1ex]
\noindent
{\bf\small AMS Mathematics Subject Classification:} {\small 90C56, 90C15, 90C25, 90C30}

\section{Introduction}\label{sec1}

A  problem common in economics, robotics, statistics, machine learning, and many more fields, is the minimization of functions for which no analytical form is accessible or the explicit calculation
of the gradient is too expensive, see e.g. \cite{Salimans2017EvolutionSA,10.5555/3326943.3327109,pmlr-v80-choromanski18a,flaxman2005online,spall,Conn2009IntroductionTD}. In this context, only function evaluations are available, often corrupted by some noise which is typically stochastic. This problem is called stochastic zeroth order or derivative-free optimization, and a number of different approaches have been proposed to compute  solutions, see for example \cite{Conn2009IntroductionTD,sto_md,nest,stars,zoro,ZO-BCD,pmlr-v162-gasnikov22a} and references therein. Next, we recall two main classes of approaches for which convergence guarantees can be proved, namely direct search and finite-difference methods. Direct search methods are iterative procedures that, at every step, compute a set of directions (\textit{polling set}) and update the current solution estimate by choosing a direction where the function decreases (descending direction) \cite{comp_search}.	To derive  convergence properties, the polling set generated must be a positive spanning set, which implies that the cardinality of the set is at least $d + 1$. Hence, in the worst case, to select a descending direction, $d+1$  evaluations of the function are required at each step. Recently, the idea of considering random directions~\cite{prob_ds, prob_ds_sketched} has been studied to reduce the cardinality of the polling set, and the corresponding number of function evaluations. Direct search methods have also been analyzed in stochastic \cite{ds_noise,conv_dds,Dzahini2022,dzahini2024directsearchstochasticoptimization,Audet2021} and non-smooth \cite{price2006direct,8161093,popovic2004direct} settings. %
The second class of algorithms is based on mimicking first-order optimization strategies (such as stochastic gradient descent), but replacing gradients by finite difference approximations in random directions ~\cite{nest,sto_md,ghadimi_lam,ZO-BCD}. These works consider objective functions in different settings (e.g. convex, non-convex, smooth, non-smooth) but do not use structured directions. Moreover, the stochastic setting is faced considering bounded variance assumption (see e.g. \cite{ghadimi_lam}) which is not satisfied in many scenarios (e.g. \cite[Proposition 1]{khaled2020better}). Structured finite-difference methods have been recently considered in deterministic setting with no noise \cite{kozak2021zeroth,Kozk2021ASS,stief_zeroth}. Stochastic setting has been considered in \cite{berahas2022theoretical} using a particular structured estimator built with a number of directions $l$ equal to the dimension of the domain $d$. However, the authors considered a specific noise model. Recently, also non-smooth analysis has been provided \cite{ozd} but only in deterministic setting. In this paper, we extend these results by proposing and analyzing S-SZD (Structured Stochastic Zeroth Order Descent), a derivative-free algorithm for stochastic zeroth order optimization that approximates the gradient with finite differences on a set of $l \leq d$ structured directions. Moreover, instead of relying on the bounded variance assumption, we consider the more general \eqref{eq:ABG} condition \cite{khaled2020better} providing the first analysis of a structured finite-difference method under such an assumption. We theoretically investigate the performance of S-SZD in convex and non-convex settings. Our main contributions are deriving convergence rates on the function values of order $\mathcal{O}((d/l) k^{-c})$ with $c < 1/2$ for smooth convex functions, where $k$ is the iteration counter and providing the convergence of iterates. Note that this can be arbitrarily close (depending on $c$) to the rate achieved by Stochastic Gradient Descent (SGD), which is $O(k^{-1/2})$ \cite{garrigos2023handbook}. While this dependence on the dimension $d/l$ is not optimal, the  convergence rate $\mathcal{O}((\sqrt{d/l}) k^{-c})$ can be achieved after at most $d/l$ iterations with a bigger step-size. Our approach allows us to consider one or more possible structured directions, and the  theoretical results clearly show the benefit of using more than one direction at once. Specifically, we observe that the corresponding  bound is better than similar bounds for deterministic direct search \cite{Dodangeh2016,3f5c9a64536d47be8f46be0b369d8c51}. Moreover, in our analysis, we also provided the convergence of iterates that was not considered in previous works \cite{nest,duchi_smoothing,flaxman2005online,stars} in stochastic setting. Further, we consider  non-convex objective functions satisfying the Polyak-{\L}ojasiewicz condition \cite{bolte}. In this case, we prove convergence and corresponding rates  for S-SZD. To the best of our knowledge these are the first results for  stochastic structured zeroth order under these  assumptions.  Our theoretical findings are illustrated   by numerical results that confirm the benefits of the proposed approach on both simulated data and hyper-parameter tuning in machine learning.

\noindent The paper is organized as follows. In Section \ref{sec:problem_setting} we 
discuss the problem setting and the proposed algorithm, S-SZD.  In Section \ref{sec:analysis} we state and discuss our main  results. In Section \ref{sec:experiements}, we present  numerical experiments and give some final remarks in Section \ref{sec:conclusions}.

\section{Problem Setting \& Algorithm}\label{sec:problem_setting}
Given a probability space $(\Omega, \mathcal{F}, \mathbb{P})$ and a measurable space $\mathcal{Z}$,  consider a random variable $Z:\  \Omega \to \mathcal{Z}$ and a function $F:\ \R^d \times \mathcal{Z}\rightarrow \R$. Define the function
\begin{equation*}
	f(x) := \E_Z [F(x, Z)]
\end{equation*}
and consider the  problem
\begin{equation}\label{eqn:problem}
	\min\limits_{x \in \mathbb{R}^d} \  f(x).   
\end{equation}
We assume the following set of hypothesis on the problem. In the next, we denote by $\nabla$ the gradient with respect to the $x$ variable and $f^*:=\inf f$.
\begin{assumption}[Smoothness and ABG condition]\label{ass:lam_smooth}
\ \\
\begin{itemize}
    \item[(i)] There exists $\lambda>0$ such that, for every $z\in\mathcal{Z}$, the function $x\mapsto F(x,z)$ is differentiable and its gradient is $\lambda$-Lipschitz continuous.
    \item[(ii)] The function $f$ verifies $f(x)<+\infty$ for every $x\in \R^d$, it is differentiable and has at least a minimizer.
    \item[(iii)] There exist constants $A, B, G> 0$ such that, for every $x\in\R^d$, 	\begin{equation}
  \label{eq:ABG}\tag{ABG}  \mathbb{E}_Z \Big[ \nor{\nabla F(x, Z)}^2 \Big] \leq A (f(x) - f^*) + B \| \nabla f(x) \|^2 + G.
	\end{equation} 
\end{itemize}
\end{assumption}
\noindent Note that in general Assumption~\eqref{ass:lam_smooth}(i) does not imply the differentiability of $f$.
In Lemma~\ref{lem:lip} in Appendix~\ref{app:aux} we show that, under mild additional assumptions on $F$, Assumption~\ref{ass:lam_smooth}(i) implies not only that $f$ is differentiable, but also that $\nabla F(x,Z)$ is an unbiased estimator of $\nabla f(x)$, namely that
\begin{equation}\label{unbiased}
\mathbb{E}_Z[\nabla F(x,Z)] = \nabla f(x),
\end{equation}
and that $\nabla f$ is $\lambda$-Lipschitz. In particular, if Assumption~\eqref{ass:lam_smooth} holds, Lemma~\ref{lem:lip} implies both \eqref{unbiased} and $\lambda$-Lipschitz continuity of $\nabla f$. The \eqref{eq:ABG} condition has been introduced in \cite{khaled2020better} and generalizes several different conditions usually considered in the literature to prove convergence of stochastic first order algorithms \cite{sto_md,balasubramanian2018zeroth}.
\begin{remark} The \eqref{eq:ABG} condition implies a bound on the variance of the stochastic estimator of $\nabla f$, that is
\begin{equation*}
    \E_Z[\| \nabla F(x, Z) - \nabla f(x) \|^2] \leq A (f(x) - f^*) + (B-1) \| \nabla f(x) \|^2 + G.
\end{equation*}
\end{remark}

\noindent Next, we propose a stochastic zeroth order iterative algorithm to numerically approximate a solution to problem~\eqref{eqn:problem}. Given a sequence $(Z_k)_{k\in\N}$ of independent copies of $Z$, at every step $k\in\N$, the stochastic gradient at $Z_k$ is approximated with finite differences computed along a set of $l \leq d$ structured directions $(p_k^{(i)})_{i=1}^{l}$. These directions are collected by the columns of random matrices $P_k \in \R^{d \times l}$, each one satisfying the following structural assumption.
\begin{assumption}[Orthonormal directions]\label{ass:pk_exp}
	Given a probability space $(\Omega^\prime, \mathcal{F}', \mathbb{P}')$ and $1 \leq l \leq d$, $P : \Omega^\prime \rightarrow \R^{d \times l}$ is a random variable such that
	\begin{equation*}
		\mathbb{E}_{P}[PP^\intercal] = I \quad \text{and} \quad 		P^\intercal P \stackrel{\text{a.s.}}{=} \frac{d}{l}I.
	\end{equation*}
\end{assumption}
\noindent The first condition means that in expectation the projection $PP^\intercal$ is the identity, while the second one means that the directions $(p_i)$ are almost surely orthogonal and with prescribed norm $\sqrt{d/l}$. 
\noindent Examples of  matrices satisfying this assumption are described in Appendix~\ref{app:dir_gen},  and include  coordinate and spherical directions. Given $P \in \R^{d \times l}$ satisfying Assumption~\ref{ass:pk_exp} and a finite difference discretization parameter $h > 0$,  define $D_{(P, h)} F(x, Z) \in \R^l$ as
\begin{equation*}
	[D_{(P, h)} F(x, Z)]_i := \frac{F(x + hp^{(i)},Z) - F(x,Z)}{h} \qquad i=1,...,l,
\end{equation*}
where $p^{(i)}\in\R^d$ is the $i$-th column of $P$. Then,  consider the following approximation of the stochastic gradient at $Z$:
\begin{equation} \label{eqn:surrogate}
	\nabla_{(P, h)} F(x, Z):= P D_{(P, h)} F(x,Z) = \sum\limits_{i=1}^l \frac{F(x + hp^{(i)},Z) - F(x,Z)}{h}p^{(i)}.
\end{equation}
\noindent Two approximations of the classic stochastic gradient of $f$ are performed. The first consists in approximating the gradient using  $l$ random orthogonal directions instead of $d$, namely approximating the gradient with
\begin{equation*}
		\nabla_{P} F(x, Z) := \sum\limits_{i=1}^l \scalT{\nabla F(x, Z)}{p^{(i)}}p^{(i)}=PP^{\intercal} \nabla F(x,Z).
	\end{equation*}
 Note that Assumption~\ref{ass:pk_exp}  implies that
$\mathbb{E}_Z [ \nabla_{P} F(x, Z) ]=\nabla F(x,Z)$.
The second approximation replaces the stochastic directional derivatives with finite differences, see \eqref{eqn:surrogate}. We define the following iterative procedure:
 \begin{algorithm}[H]
 	\caption{S-SZD: Structured Stochastic Zeroth order Descent}\label{alg:szo}
 	\begin{algorithmic}[1]

 		\Require $x_0 \in \mathbb{R}^d$, $(\alpha_k)_{k \in \N}\subset \R_+, (h_k)_{k \in \N}\subset \R_+,l \in \N$ s.t. $1 \leq l \leq d$

 		\For{$k=0,\cdots$}
 		\State build $P_k\in\R^{d \times l}$ satisfying Assumption~\ref{ass:pk_exp}
		
 		\State compute 
 			\begin{equation}\label{eqn:szo}
 				x_{k + 1} = x_k - \alpha_k \nabla_{(P_k, h_k)} F(x_k, Z_k).
 			\end{equation}
	
 		\EndFor		
 	\end{algorithmic}
 \end{algorithm}
\noindent The above iteration has the  same structure of stochastic gradient descent, but the stochastic gradient is replaced  by the surrogate in eq.~\eqref{eqn:surrogate}. Each step  depends on the direction matrix $P_k$ and two free parameters: the step-size $\alpha_k>0$ and the finite difference discretization $h_k>0$. Our main contribution is the theoretical and empirical analysis of this algorithm with respect to these parameters choice. We make the following assumptions.
\begin{assumption}[Summability properties of step-size and discretization]\label{ass:step_disc}
The step-size sequence $\alpha_k>0$ and the discretization sequence $h_k>0$ satisfy
 \begin{equation*}
		\alpha_k \not\in \ell^1, \qquad \alpha_k^2 \in \ell^1 \qquad \text{and} \qquad \alpha_k h_k \in \ell^1.
	\end{equation*}
\end{assumption}
\noindent The previous hypothesis is required in order to guarantee convergence of the iterates of the algorithm to a solution. Note that a similar assumption is used in \cite{kozak2021zeroth}.	An example of choices of step-sizes and discretization parameters that satisfy the assumption above are given by $\alpha_k = \alpha(k + 1)^{-r}$ with $\alpha>0, \ r\in (1/2, 1]$; and $h_k = h(k + 1)^{-s}$ with $h>0, \ s \geq 1/2$.

\subsection{Related work} \label{sec:related_work}
Several zeroth-order methods are available for deterministic and stochastic optimization. Typical  algorithms use directional vectors to perform a direct search, see \cite{comp_search,sds,stp,prob_ds,prob_ds_sketched}, or mimic first order strategies by approximating the gradient with finite differences, see \cite{sto_md,nest,kozak2021zeroth,Kozk2021ASS,pmlr-v162-gasnikov22a,zoro,ZO-BCD,stars,stars_acc,cai2022one}.  \\	
{\bf Direct Search.}
Direct Search (DS) methods build a set of directions $G_k$  (called polling set) at every step $k\in\N$, and  evaluates the objective function at the points $x_k + \alpha_k g$ with $g \in G_k$ and $\alpha_k>0$. If one of the trial points satisfies a sufficient decrease condition, this point becomes the new iterate $x_{k+1}$ and the step-size parameter $\alpha_k$ is (possibly) increased. Otherwise, the current point $x_k$ does not change ($x_{k+1}=x_k$) and the step-size $\alpha_k$ is diminished. These methods can be split in two categories: deterministic and probabilistic. In deterministic DS, the cardinality of the set $G_k$ is at least $d + 1$ and the bound on the number of required function evaluations  such that $\nor{\nabla f(x)} \leq \epsilon$ (with $\epsilon$ tolerance)  is  $O(d^2 \epsilon^{-2})$~\cite{3f5c9a64536d47be8f46be0b369d8c51,Dodangeh2016}. %
Probabilistic direct search has been introduced to improve the dependence on $d$. Polling sets can be   randomly generated and while they  do not form a positive spanning set, they can  still provide a good approximation of each vector in $\R^d$,  \cite{prob_ds,prob_ds_sketched}.  
For example, results in  \cite{prob_ds_sketched} show better complexity bounds using a method based on   a subspace of $\R^d$ and a polling set within this  subspace.

\noindent Deterministic and probabilistic DS methods select only one direction at each step. Every previous function evaluation  on non-descending directions is discarded and different polling sets may be generated  to perform a single step. For instance, this is the case for large  step-sizes and when many reductions have to be performed to generate a descending direction.  A core difference between DS and Algorithm~\ref{alg:szo} is that our method uses all the $l$ sampled  directions to perform an update which is the key feature to provide better performances. \\
{\bf Finite differences based.}
Our approach belongs to the category of finite differences based methods. This class of algorithms was introduced in \cite{KieWol52} (see also \cite {spall}) and has been investigated both for deterministic and stochastic optimization, under various assumptions on the objective function, ranging from smooth to nonsmooth optimization. The vast majority of methods  use a single direction vector to approximate a (stochastic) gradient, and  accelerated approaches are available (see e.g. \cite{nest,stars_acc}). In~\cite{nest,stars,sto_md,pmlr-v162-gasnikov22a} the direction vector is randomly sampled  from a Gaussian  or a spherical distribution. In \cite{zoro,ZO-BCD}, the direction is sampled according to a different distribution, and with a slightly different noise model. The advantages of using multiple directions have been empirically shown on reinforcement learning problems \cite{10.5555/3326943.3327109,Salimans2017EvolutionSA}, and have been theoretically investigated in \cite{zoro,ZO-BCD,sto_md,duchi_smoothing}, where the directions are independently sampled and are not structured.  A key feature of our approach is the use of structured multiple directions, allowing for a better exploration of the space.

\noindent A similar approximation is considered in \cite[Section 2.2]{berahas2022theoretical}. However, the authors analyzed only the quality of the approximation in the case $l = d$ with a different noise model. In \cite{grapiglia2022worst}, a finite difference approximation of the gradient with multiple coordinate directions is proposed but the algorithm is different and it was analyzed only in deterministic setting. In \cite{ozd} a structured finite-difference method for non-smooth optimization is proposed but it has been analyzed only in deterministic setting. In \cite{stief_zeroth} the authors provide a structured zeroth-order method in which structured directions are sampled from the Stiefel manifold \cite{chikuse2012statistics}. However, the analysis is provided only for Lojasiewicz functions \cite{lojasiewicz1963topological} and only the deterministic setting is considered. Moreover, our analysis is performed for a more general class of directions. Our method can be seen as a generalization to the stochastic setting of the approach proposed in  \cite{kozak2021zeroth}. Moreover, if for all $k \in \N$, we set $h_k = h$ and $h$  approaches $0$, iteration ~\eqref{eqn:szo} can be written as 
\begin{equation*}
	x_{k + 1} = x_k - \alpha_k P_k P_k^\intercal \nabla F(x_k, z_k),
\end{equation*}
which is the extension of the algorithm  in~\cite{Kozk2021ASS} to the stochastic setting.	
As already noted our results seem to be the first finite-difference approach to study convergence rates for the stochastic zeroth-order minimization under the \eqref{eq:ABG} condition.\\% non-convex Polyak-Łojasiewicz setting.\\
{\bf Other methods.}
Different other strategies were proposed in the literature. Model-based methods iteratively build models that approximate the problems (e.g., using polynomials) around the current iterate and choose the next iterate according to the approximated problems. To make model-based methods globally convergent, a globalization strategy (e.g. line-search) or a trust-region are used. For instance, Powell methods \cite{Powell1994,Powell2002,Powell2006,powell2009bobyqa} iteratively explore the parameter space by combining directional searches with line searches, aiming to minimize the objective function using only function evaluations. Trust region methods \cite{Cartis2023,blanchet2019convergence,dzahini2022stochastic,ha2024iteration}, build and adapt an exploration region based on the local behavior of the target function. Other approaches, like \cite{srinivas2009gaussian,adabkb,frazier2018tutorial}, consist in approximating the objective function with a statistical model and using the approximation to select the next iterate (Bayesian optimization). However, these algorithms consider functions defined only on a bounded subset of $\R^d$, and the computational cost scales exponentially in $d$. Recent methods \cite{adabkb,shekhar2018gaussian,Salgia2021ADB} scale better in the number of iterations, but the exponential dependency on the dimension is still present in the cumulative regret bounds. A further class of algorithms is called meta-heuristics \cite{cma,pso,Salimans2017EvolutionSA}. 
Recently there has been an increasing interest in the theoretical study of such methods (see for instance \cite{totzeck2022trends,cbo}), but   convergence rates in terms of function evaluations are not available yet.
\section{Main Results}\label{sec:analysis}
 In the following sections,  we study several convergence properties of our algorithm under different assumptions. We establish the convergence of the iterates and derive convergence rates when $f$ is a convex function. Subsequently, we derive convergence rates for $f$ satisfying the Polyak-Łojasiewicz inequality (see \cite{bolte}), without requiring convexity. %

\subsection{Convex case}
 In this section we assume the following.
 \begin{assumption}[Convexity]\label{ass:fun_conv}
	For every $z\in\mathcal{Z}$, the map $x\mapsto F(x,z)$ is convex.
\end{assumption}
\noindent Notice that Assumption~\ref{ass:fun_conv} implies that $f$ is convex. Next, we state the main theorem on convergence of the iterates and on rates for the functional values. Then, we show explicit rates for specific choices of $\alpha_k$ and $h_k$.

\begin{theorem}[Convergence and rates]\label{thm:creg}
    Under Assumptions \ref{ass:lam_smooth}, \ref{ass:pk_exp}, \ref{ass:step_disc} and \ref{ass:fun_conv}, suppose also that $\alpha_k\leq \bar{\alpha}$ with $\bar{\alpha} d (A+\lambda B)/ l <1$. Let $(x_k)$ be the sequence generated by Algorithm \ref{alg:szo}. 
    Then, %
    we have 
    \begin{equation*}
        \lim\limits_{k} f_k = \min f
    \end{equation*}
	and there exists a random variable $x^*$ supported in $\argmin f$ such that $x_k \rightarrow x^*$ a.s.\\
 Moreover, let $(\hat{x}_k)$ be the sequence of the averaged iterates, namely 
 $$\hat{x}_k:=\frac{1}{\sum_{j=0}^k \alpha_j} \ \sum_{j=0}^k \alpha_j x_j.$$ 
 Then, for every $\delta \in \left(0, 1-d (A+\lambda B) \bar{\alpha}/l\right]$ and for every $\bar{x} \in \argmin f$, we have that
\begin{align}\label{eq:prerate0}
&\E[f(\hat{x}_k) - f^*]  \leq \frac{1}{2 \delta \sum\limits_{j=0}^k\alpha_j} \left[S_k+  \sum\limits_{j=0}^k \rho_j \left(\sqrt{S_{j}} + \sum\limits_{i=0}^{j}\rho_i\right)\right],
\end{align}
where for every $k \in \mathbb{N}$
 \begin{equation}
S_k := \nor{x_0 - \bar{x}}^2 + \sum\limits_{j=0}^{k} W_j
\end{equation}
 and,
 \begin{equation}
		W_k :=  \frac{ d}{l} \Bigg( \frac{ \lambda^2 d^2}{2l}h_k^2 + 2G \Bigg) \alpha_k^2, \quad \rho_k :=  \frac{\lambda d \sqrt{d}}{l} \alpha_k h_k.
  \end{equation}
\end{theorem}
\noindent The proof of the previous theorem can be found in Appendix \ref{app:proofs_abg_cond} - see Proposition \ref{prop_convx} and Theorem \ref{th_convex}. 
\begin{remark}
    Note that, if $A=B=0$, meaning that the expectation of the squared norm of the stochastic gradients is bounded, we do not need to assume any upper-bound on $\bar{\alpha}$. As we will see in Corollary~\ref{cor:conv}, this will allow to derive bounds with a better dependence on the dimension.
\end{remark}
\begin{remark}
    Under Assumption \ref{ass:step_disc}, the sequences $W_k$ and $\rho_k$ are both summable and so $S_k$ is bounded. Then the rate in \eqref{eq:prerate0} can be written as
    \begin{align*}
&\E[f(\hat{x}_k) - f^*]  \leq C/\sum\limits_{j=0}^k\alpha_j 
\end{align*}
for some constant $C>0$. Again from Assumption \ref{ass:step_disc}, we know that $\sum_{j=0}^k \alpha_j$ diverges for $k\to + \infty$ so that, from the inequality above, $f(\hat{x}_k)$ converges to $f^*$ in expectation.
\end{remark}
\noindent To clarify the previous result we state a corollary in which we make  explicit the dependence of the right-hand-side both on the dimensions $d$ and on the number of directions $l$.
\begin{corollary}\label{cor:conv}
    Let Assumptions \ref{ass:lam_smooth},  \ref{ass:pk_exp} and \ref{ass:fun_conv} hold. Set $\alpha_k = \bar{\alpha} a_k$, where  $\bar{\alpha}>0$ and $0<a_k \leq 1$. If $A+\lambda B >0$, set $\bar{\alpha}= l \eta / [d(A + \lambda B)]$
    for some $0<\eta <1$. Set also $h_k =\bar{h} \xi_k$ with $\bar{h}>0$ and $0<\xi_k\leq 1$. Moreover, assume $a_k^2 \in \ell^1$ and $a_k \xi_k \in \ell^1$. Let $(\hat{x}_k)$ be the sequence of the averaged iterates generated by Algorithm \ref{alg:szo}. Then we have, for some constant $C>0$,
    \begin{equation}
        \begin{aligned}\label{ineqqq}
        \E[f(\hat{x}_k) - f^*]  \leq \frac{C}{\bar{\alpha} {\sum_{j=0}^k a_j}} \Bigg[& \|x_0 - \bar{x}\|^2 + \frac{\lambda^2 d^3 \bar{\alpha}^2 \bar{h}^2}{l^2} +  \frac{G d\bar{\alpha}^2}{l}\\
        &+ \|x_0 -\bar{x}\| \frac{\lambda d\sqrt{d} \bar{\alpha} \bar{h} }{l} + \frac{\lambda \sqrt{G} d^2 \bar{\alpha}^2 \bar{h}}{l \sqrt{l}}\Bigg].            
        \end{aligned}
    \end{equation}
    \ \\
    Now consider the case $A+\lambda B >0$ and so $\bar{\alpha}= l \eta / [d(A + \lambda B)]$ for $0<\eta<1$. Then, showing only the dependence with respect to $d, l$ and $\bar{h}$, for some constant $C_1>0$, we have
    \begin{equation}\label{ineq1}
        \begin{aligned}
        \E[f(\hat{x}_k) - f^*]  &\leq \frac{C_1}{ {\sum_{j=0}^k a_j}} \ \Bigg[ \frac{d}{l} + \frac{d^2 \bar{h}^2}{l} +  1+\frac{ \bar{h} d \sqrt{d}}{l} + \frac{d \bar{h}}{\sqrt{l} }\Bigg].            
        \end{aligned}
    \end{equation}
    In particular, choosing also $\bar{h}$ as a suitable function of $d$ and $l$, there exists a constant $D_1>0$ such that
    \begin{equation*}
        \begin{aligned}
        \E[f(\hat{x}_k) - f^*]  &\leq  \left(\frac{d}{l}\right) \frac{D_1} {\sum_{j=0}^k a_j} .
        \end{aligned}
\end{equation*}
Now consider the case $A=B=0$ and choose $\bar{\alpha}=\sqrt{l/d}$. Then, showing only the dependence with respect to $d, l$ and $\bar{h}$, for some constant $C_2>0$, we get
\begin{equation}\label{ineq2}
        \begin{aligned}
        \E[f(\hat{x}_k) - f^*]  &\leq \frac{C_2}{ {\sum_{j=0}^k a_j}} \Bigg[ \frac{\sqrt{d}}{\sqrt{l}} + \frac{d^2\sqrt{d} \bar{h}^2}{l\sqrt{l}} + \frac{d\sqrt{d} \bar{h}}{l}\Bigg].      
        \end{aligned}
    \end{equation}
    In particular, choosing also $\bar{h}$ as a suitable function of $d$ and $l$, there exists a constant $D_2>0$ such that
    \begin{equation*}
        \begin{aligned}
        \E[f(\hat{x}_k) - f^*]  &\leq \left(\sqrt{\frac{d}{l}}\right) \frac{D_2}{ {\sum_{j=0}^k a_j}} .  
        \end{aligned}
    \end{equation*}
\end{corollary}

\noindent The proof of the previous corollary can be found in Appendix \ref{app:proof_cor_conv}.%

\begin{corollary}
Under Assumptions \ref{ass:lam_smooth}, \ref{ass:pk_exp} and \ref{ass:fun_conv}, set $\alpha_k = \bar{\alpha} (k+1)^{-r}$ with $\bar{\alpha}{d(A + \lambda B)}/l<1$, $r \in \left(1/2, 1\right]$; and set $h_k = ({h}/{d\sqrt{d}})(k+1)^{-s}$ with ${h} >0$ and $s \in [1/2, +\infty)$. Let $(\hat{x}_k)$ be the sequence of the averaged iterates generated by Algorithm \ref{alg:szo}. Then we have that, for every $k \in \mathbb{N}$, there exist positive constants $D_1$ and $D_2$ such that
\begin{equation*}
    \begin{aligned}
    \E[f(\hat{x}_k) - f^*] \leq \begin{cases}  
    \dfrac{d}{l}\dfrac{{D_1}} {(k+1)^{1-r}}   & \text{if } A+\lambda B >0 \\
    \ &\\
    \sqrt{\dfrac{d}{l}} \dfrac{D_2} {(k+1)^{1-r}}  & \text{if } A= B = 0.
    \end{cases}
    \end{aligned}
\end{equation*}
Moreover, the number of function evaluations required to obtain an error $\varepsilon \in (0, 1)$ is
\begin{equation*}
    \mathcal{O}\left( \frac{1}{l^{\frac{r}{1-r}}} \left(\frac{d}{ \varepsilon} \right)^{\frac{1}{1 - r}} \right).
\end{equation*}
\end{corollary}
\paragraph*{Discussion} 
The bound in Theorem \ref{thm:creg} depends on three kinds of terms: the initialization $\nor{x_0 - x^*}^2$; the errors generated by the finite differences approximation, involving the discretization $h_k$; and a term representing the stochastic noise, involving the bound on the variance $G$. Since the convergence rate is inversely proportional to the sum of the step sizes, we would like to take the step sizes as large as possible. On the other hand, from Assumption \ref{ass:step_disc}, the sequence of the step sizes $\alpha_k$ needs to converge to zero for $k\to +\infty$, to make the stochastic error vanish. A balancing choice for this trade-off is given in Corollary \ref{cor:conv}. Note that our result considers a more general setting than state-of-the-art approaches. Indeed, state-of-the-art methods are generally analyzed under  stronger versions of Assumption~\eqref{ass:lam_smooth}(i) (i.e. where $A = 0$ and $B = 1$ or even $A=B=0$). Our result is  similar to the one in \cite{kozak2021zeroth} but slightly worse due to the stochastic setting. In particular, for $r$ approaching to $1/2$, the rate approaches $1/\sqrt{k}$, which is the rate for SGD \cite{garrigos2023handbook}. Note that the choice of $s$ has not a direct impact on the rate since it affects only terms that decrease faster than $1/k^{1-r}$ - see Appendix \ref{app:proof_cor_conv}. The setting of \cite[Assumption B]{sto_md} corresponds to the choice $A=B=0$, and in this case, as in \cite{sto_md}, we get a dependence from the dimension of $\sqrt{d/l}$ instead of $d/l$. In the general case as well, for $k > d/l$, we have that $\alpha_k = \sqrt{l/d} \lambda^{-1} (1+k)^{-r}$ satisfies Assumption~\ref{ass:step_disc} and therefore also in this case we can derive the same dependence on the dimension. 

\noindent Note that the rate, given in terms of function evaluations, improves when $l$ increases. This shows the importance of taking more than one structured direction at each iteration (see Section~\ref{sec:experiements} for the empirical counterpart of this statement). This means that, if we fix a computational budget, it is more convenient to make less iterations with a bigger $l$ (and in fact $l=d$) rather than more iterations with $l=1$. 

\noindent With $l=1$, we get a similar complexity bound to Probabilistic Direct Search~\cite{prob_ds,prob_ds_sketched}. Note that our complexity bound is better than deterministic direct search bound~\cite{3f5c9a64536d47be8f46be0b369d8c51,Dodangeh2016}. However, an exact comparison of our results with Direct Search methods is not straightforward since the considered quantities are different. Moreover, note that our analysis neither relies on smoothing properties of finite differences e.g. \cite{nest,flaxman2005online,stars} nor on a specific distribution \cite{nest,duchi_smoothing,flaxman2005online} from which directions are sampled. Therefore the results hold for a more general class of directions (e.g. our analysis works also using coordinate directions). Moreover, note that our analysis also provides the convergence of iterates that was not considered in previous works \cite{nest,duchi_smoothing,flaxman2005online} in stochastic setting. Finally, we remark that the same bounds can be stated for the best iterate, instead of the averaged one.
 
\subsection{Non-convex setting}

We now consider the non-convex setting, where we suppose only that the objective  $f$ satisfies the Polyak-Łojasiewicz condition; see \cite{bolte}.

\begin{assumption}[Polyak-{\L}ojasiewicz condition]\label{ass:pl}
	The function $f$ is $\gamma$-PL; namely, it is differentiable and there exists $\gamma>0$ such that
	\begin{equation*}
		(\forall x \in \R^d) \qquad \nor{\nabla f(x)}^2 \geq \gamma (f(x) - f^*).
	\end{equation*}
\end{assumption}
\noindent Recall that, for instance, if $f$ is $\mu$ strongly convex, then Assumption \ref{ass:pl} holds with $\gamma=2\mu$. 

\begin{remark}
    Suppose that Assumption \ref{ass:pl} holds. Then, if Assumption~\eqref{ass:lam_smooth}(i) holds with $A,\ B$ and $G$, it holds also with $\tilde{A}=0,\  \tilde{B}=B+A/\gamma$ and $\tilde{G}=G$.
\end{remark}

\begin{theorem}[Convergence Rates for PL functions]\label{thm:abg_pl_rates}
	Under Assumptions \ref{ass:lam_smooth}, \ref{ass:pk_exp} and \ref{ass:pl}, suppose in addition that $\alpha_k\leq\bar{\alpha}$ with  $\bar{\alpha} d\lambda B /l \leq 1$. Let $(x_k)$ be the random sequence generated by Algorithm~\ref{alg:szo} and $f_k:=f(x_k)$. Then, for every $k\in \N$,
 \begin{equation}\label{eqn:nonc_b1}
        \begin{aligned}        
           \E \left[f_{k+1}\right] - f^* \leq & \left[1-\alpha_k \Big( \frac{\gamma}{2}-\frac{d\lambda\left(A+\gamma B\right)}{l}\bar{\alpha} \Big) \right] \left[\E \left[f_k\right] - f^*\right] + C_k,
        \end{aligned}
	\end{equation}
where
 \begin{equation*}
        C_k := \frac{\lambda  d}{l} \alpha_k^2 G+ \alpha_kh_k^2\Big(\frac{\lambda d }{2\sqrt{l}}\Big)^2\Big(\frac{1}{2}+\frac{\lambda d}{l}\alpha_k\Big).
    \end{equation*}
\end{theorem}
\noindent The proof of the previous theorem is given in Appendix \ref{app:nonconv1}.

\noindent Note that the bound in the previous theorem is useful only if the coefficient of $\E \left[f_k\right] -f^*$ is strictly less than 1. To ensure this property, it is enough to assume a more stringent condition on $\bar{\alpha}$, namely  $2 \bar{\alpha} d\lambda (A+\gamma B) /(\gamma l) \leq 1$. 

\noindent The bound in equation \eqref{eqn:nonc_b1} is  composed by three parts: a term depending from the previous iterate, a term representing the approximation error (involving $h_k$) and a term representing the error generated by the stochastic information (involving $G$). 
In the following corollary, we explicitly show convergence rates by choosing $\alpha_k$ and $h_k$.
\begin{corollary}\label{cor:nonconv}
Under the same assumptions of Theorem \ref{thm:abg_pl_rates}, let $\bar{\alpha}>0$ and choose $\alpha_k = \bar{\alpha} (k + 1)^{-\theta}$ and $h_k = h (k + 1)^{-\theta/2}$ with $1/2 < \theta < 1$, $h > 0$.
If $A+\gamma B >0$, set $2\bar{\alpha}\lambda d(A + \gamma B)= l \eta$ for some $0\leq\eta <1$.
 Then, showing only the dependence with respect to $d, l, \bar{\alpha}$ and $h$, we have that, for every $k \in \mathbb{N}$,
\begin{equation*}
\mathbb{E}[f_k - f^*] \leq
    \dfrac{2 \lambda d}{\gamma l (1-\eta)} \Big[G\bar{\alpha}+\dfrac{h^2\lambda d}{8}+\dfrac{h^2\lambda^2 d^2 \bar{\alpha}}{4l} \Big](k + 1)^{-\theta}+o((k + 1)^{-\theta}) 
\end{equation*}
\end{corollary}

\noindent The proof of this corollary is in Appendix~\ref{app:nonconv2}.%
\paragraph*{Discussion}  
Theorem~\ref{thm:abg_pl_rates}, in the PL setting, proves convergence in expectation to the global minimum of the function. To the best of our knowledge, Theorem~\ref{thm:abg_pl_rates} is the first convergence result for a structured finite-difference method in a stochastic setting for non-convex functions under the \eqref{eq:ABG} condition. In the more classical scenario where $A=B=0$, the upper bound on the stepsize $\bar{\alpha}$ is arbitrary (and in this case $\eta=0$). Looking at the statement of Theorem~\ref{thm:abg_pl_rates}, convergence cannot be guaranteed for a constant step-size. This is due to the presence of non-vanishing stochastic and discretization errors. For a constant step-size choice, we can however observe a linear convergence towards the minimum plus an additive error. As we  notice looking at the constants, increasing the number of directions improves the worst case bound in the non-convex PL case. Indeed, when we increase $l$, the error in the constants becomes smaller. Convergence is then derived by suitably choosing decreasing step-sizes and discretization parameters. In the corollary, we derive a convergence rate arbitrarily close to $1/k$, which is the same of the SGD in the strongly convex case. As for first order methods, the convergence rate depends on  $\gamma$. 
The rates derived in Corollary~\ref{cor:nonconv} show that, for suitable choices of $h$ (and $\bar{\alpha}$ in the case $A+\gamma B=0$), the constants in the leading term do not depend on the dimension. Such a dependence remains in the $o$-term. For instance, if $A+\gamma B >0$
choosing $\eta=1/2$ and $h=\sqrt{\ell}/d$, the constant in front of the leading term is
\[
\frac{4G + \lambda^2 (A+\gamma B+1)}{ 4\gamma(A+\gamma B).} 
\]
Comparing the result obtained in Theorem~\ref{thm:abg_pl_rates} and Corollary~\ref{cor:nonconv} with the analysis of PL case in~\cite{kozak2021zeroth}, we  observe that we get a similar bound. However, since we are in stochastic case and we cannot take $\alpha_k$ constant or lower-bounded.

\subsection{Expanded discussion}\label{app:exp_discussion}
In this section, we compare S-SZD~\ref{alg:szo} with stochastic finite-difference methods proposed in previous works.  We compare convergence rates obtained for solving problem \eqref{eqn:problem} considering different settings. 
In \cite{nest}, the authors propose a zeroth-order algorithm for stochastic non-smooth optimization. They assume $L$-Lipschitz continuity of the function and that for every $x \in \mathbb{R}^d$, $\|x - x^*\| \leq \bar{R}$ for some $\bar{R} < + \infty$. Fixing a maximum number of iterations $K$ and choosing $\alpha_k \leq \bar{R} / ((d + 4)\sqrt{(K + 1)}L)$ and $h_k = \epsilon / (2 L \sqrt{d})$ with $\epsilon \in (0, 1)$, they obtain a convergence rate of $\mathcal{O} ({d}/\sqrt{k})$. With S-SZD, we obtain $\mathcal{O}(({d}/{l}){1/k^{1 - r}})$ with $1/2 < r < 1$.  Note that, taking $r = 1/2 + \delta$ with $\delta > 0$ and $l=1$, our rate is arbitrary near to the rate in \cite{nest}. For $l > 1$, we get a better dependence on dimension showing that taking more than one direction provides better performances. Moreover, we do not need to assume that $\|x - x^*\| \leq R$. Note also that the setting we consider is different: we do not need to assume Lipschitz continuity of the function but we assume $\lambda$-smoothness (Assumption \ref{ass:lam_smooth}). 
In \cite{sto_md}, the authors propose a zeroth-order algorithm for stochastic smooth optimization which approximates the gradient with a set of unstructured directions. The convergence rate in convex case is $\mathcal{O} \Big((1 + d/l)^{1/2}/k^{1/2} \Big)$.
However, they obtain this rate by choosing the following parameters $\alpha_k = (\alpha R)/(2G \max\{\sqrt{d/l}, 1\} \sqrt{k})$ and $h_k = (h G)/(\lambda d^{3/2} k)$. Note that the dependence on the iteration $k$ is slightly better than ours (depending on the choice of $r$). However, they consider a more restrictive setting. Specifically, they assume that for every $x$, $\nor{x - x^*}^2 \leq (1/2) R^2$ for some $R < + \infty$ - see \cite[Assumption A]{sto_md} while we do not make this assumption. Moreover, they assume that $\E[\nor{\nabla F(x, z)}^2]$ is uniformly bounded by a constant (see \cite[Assumption B]{sto_md}) which corresponds to the choice $A = B = 0$. In such a scenario, we obtain the same dependence on the dimension. 
In \cite{ghadimi_lam}, the authors provide a finite-difference method for stochastic optimization that in the convex setting, has a convergence rate of $\mathcal{O}(\sqrt{d / (k + 1)})$.  This result is obtained fixing a priori the maximum number of iteration $K > 0$ and choosing the stepsize and the discretization parameter as 
\begin{equation*}
    \alpha_k = \frac{1}{\sqrt{d + 4}} \min \left\{\frac{1}{4L(\sqrt{d + 4})}, \frac{\tilde{D}}{G \sqrt{K + 1}} \right\} \quad \text{and} \quad h_k \leq \frac{\|x_0 - x^* \|}{\sqrt{d + 4}},
\end{equation*}
for some $\tilde{D} > 0$ - see \cite[Corollary 3.3]{ghadimi_lam}. Note that, this implies that the dependence on the dimension in the step-size is $1/\sqrt{d}$ when $K > \mathcal{O}(d)$. Such results hold under \cite[Assumption A1]{ghadimi_lam}, which  corresponds to the choice $A = 0$ and $B = 1$. Note that, in such a setting, choosing $\alpha_k = \sqrt{l/d} \lambda^{-1} (k + 1)^{-r}$ for $k > d/l$, we derive a dependence on the dimension of the order $\mathcal{O}(\sqrt{d/l})$, with no need to fix a priori the maximum number of iterations.  %
In Table~\ref{tab:expanded_discussion} we summarize the convergence rates of S-SZD and \cite{nest,sto_md,ghadimi_lam} along with the considered  choice of parameters. Note that the convergence results obtained in \cite{nest,ghadimi_lam} are obtained fixing a maximum number of iterations $K > 0$. We underline that the non-convex case is not analyzed in \cite{nest} and \cite{sto_md}, and the non-convex Polyak-Łojasiewicz case is not considered in \cite{ghadimi_lam}. Additionally, our setting is more general than those considered in \cite{sto_md,ghadimi_lam}, which correspond to the choices $A = B = 0$ and $A = 0, \, B = 1$, respectively. Note that for the deterministic setting, the result obtained in \cite{kozak2021zeroth} hold since our algorithm is its direct generalization to the stochastic setting.

 \begin{table}[ht]
  	\caption{Comparison of finite-difference based algorithms in stochastic convex setting. The rate is on the expected objective function value. Here $K > 0$ is the maximum number of iteration}
  	\label{tab:expanded_discussion}
  	\centering
  	\begin{tabular}{lllll}
  		\toprule
  		Algorithm & Rate &  $\alpha_k$ & $h_k$\\
  		\midrule
  		Nesterov et al. \cite{nest} & $\mathcal{O} \left(\frac{d}{\sqrt{K}}\right)$ & $\frac{\bar{R}}{(d + 4)\sqrt{K}L}$  & $\frac{\epsilon}{2 L \sqrt{d}}$ \\
  		Duchi et al. \cite{sto_md} & $\mathcal{O}\left(\sqrt{\frac{d}{l}} \frac{1}{\sqrt{k + 1}}\right)$ &  $\frac{\alpha R}{2G \max\{\sqrt{d/l}, 1\} \sqrt{k + 1}}$ & $\frac{hG}{\lambda d^{3/2} (k + 1)}$\\
         Ghadimi \& Lan \cite{ghadimi_lam}  & $\mathcal{O}\left(\sqrt{\frac{d}{K}} \right)$ & $\frac{1}{\sqrt{d + 4}} \min \left\{\frac{1}{4L(\sqrt{d + 4})}, \frac{\tilde{D}}{G \sqrt{K}} \right\}$ & $\frac{\|x_0 - x^* \|}{\sqrt{d + 4}}$\\
  		\textbf{S-SZD} & $ \mathcal{O} \left( \sqrt{\frac{d}{l}} \frac{1}{(k + 1)^{1 - r}} \right)$ &  $ \frac{\bar{\alpha}}{(k + 1)^{r}}$ & $\frac{h}{d\sqrt{d}} \frac{1}{(k + 1)^s}$\\
  		\hline
  	\end{tabular}
  \end{table}

\section{Experiments}\label{sec:experiements}
In this section, we study the empirical performance of S-SZD:   we start by analyzing performances of S-SZD by increasing the number of directions $l$ and then we compare it with different algorithms. We consider  Stochastic Three Points (STP)~\cite{stp}, Probabilistic DS (ProbDS)~\cite{prob_ds}, its reduced space variants (ProbDS-RD)~\cite{prob_ds_sketched}, COBYLA \cite{Powell1994} and other unstructured finite-difference (FD) algorithms. As mentioned in Section \ref{sec:related_work}, finite-difference algorithms emulates first-order methods by approximating the gradient. They differ from S-SZD since they do not enforce any structure on the direction matrices, such as orthogonality. Specifically, we explore direction matrices constructed by sampling one or more random coordinate directions, random Gaussian directions, and random directions uniformly sampled from a sphere. In particular, finite-difference methods with random Gaussian or Spherical directions have been studied in many works - see e.g. \cite{nest,ghadimi_lam,berahas2022theoretical,flaxman2005online,duchi_smoothing}. Note that a convergence analysis for these direct search methods is not available in the stochastic setting \cite[Conclusions]{prob_ds_sketched}, and therefore convergence is not guaranteed. For S-SZD we consider %
structured spherical directions - see Appendix \ref{app:dir_gen} for details. For ProbDS/ProbDS-RD we use independent spherical and orthogonal directions. Sketching matrices for ProbDS-RD are built by using orthogonal vectors - for details see~\cite{prob_ds_sketched}. The number of directions used for polling matrices of ProbDS is $2$, the sketching matrices ProbDS-RD have size $l = d/2$. For finite-difference methods, we consider the algorithms proposed in Flaxman et al. \cite{flaxman2005online}, Berahas et al. \cite{berahas2022theoretical}, Ghadimi \& Lan \cite{ghadimi_lam}, Duchi et al. \cite{duchi_smoothing}, a finite-difference method employing a single random coordinate direction (SCD) and another using $l=d$ coordinate directions (Deterministic FD). Note that the first four algorithms are based on non-structured finite-differences approximations. The latter two algorithms can be viewed as particular cases of S-SZD, employing coordinate directions with a number of directions equal to $1$ and $d$, respectively. We refer to Appendix~\ref{app:experiment_details} for further details and results.
\paragraph*{Increasing the number of directions} In these experiments, we fix a budget of $50000$ function evaluations and we observe how the performances of S-SZD change by increasing $l$. We report mean and standard deviation using $10$ repetitions.
\begin{figure}[H]
	\centering
    \includegraphics[width=\linewidth]{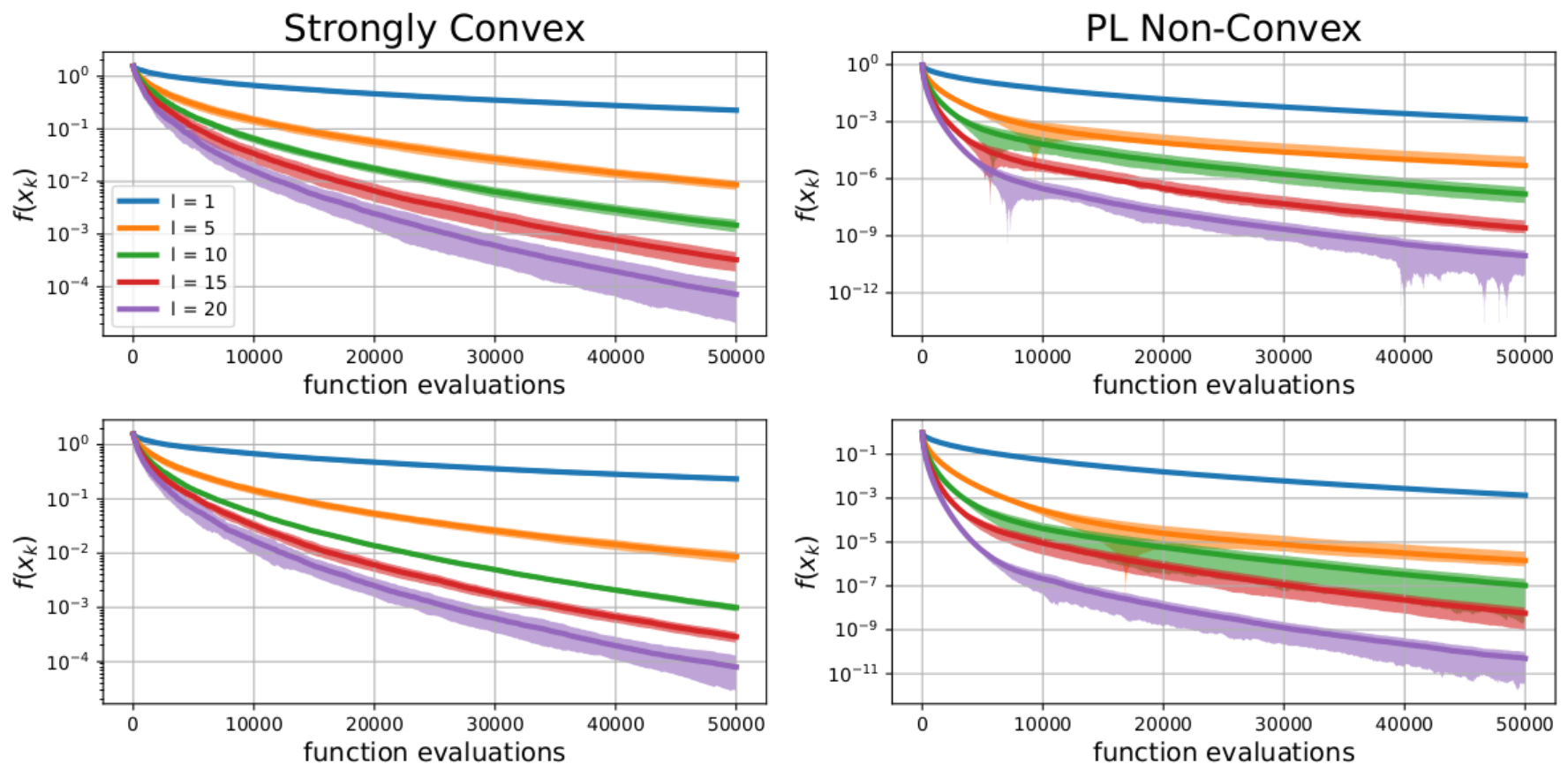}
	\caption{From left to right,  function values per function evaluation in optimizing a strongly convex and a PL non-convex target functions using S-SZD with different numbers of directions. In the first row, direction matrices are built using structured spherical directions. In the second row, coordinate directions are used. Details on the considered functions can be found in Table~\ref{tab:syn_fun}, Appendix~\ref{app:syn_exp_details}}.
	\label{fig:changing_l_1}
\end{figure}
\noindent In Figure~\ref{fig:changing_l_1}, we show the objective function values, which are available in this controlled setting after each function evaluation. When $l > 1$, values are repeated $l$ times indicating that we have to perform at least $l + 1$ function evaluations to update the objective function value. Note that for a budget large enough, increasing the number of directions $l$, provides better results w.r.t. a single direction. 
\paragraph*{Illustrative Experiments} We consider the problem of minimizing functions that satisfy Assumptions \ref{ass:fun_conv} and \ref{ass:pl}. Specifically, we consider the optimization of three functions - details in Appendix~\ref{app:experiment_details} and Table \ref{tab:syn_fun}. We plot the mean and standard deviation using $10$ repetitions.

\begin{figure}[H]
	\centering
    \includegraphics[width=0.9\linewidth]{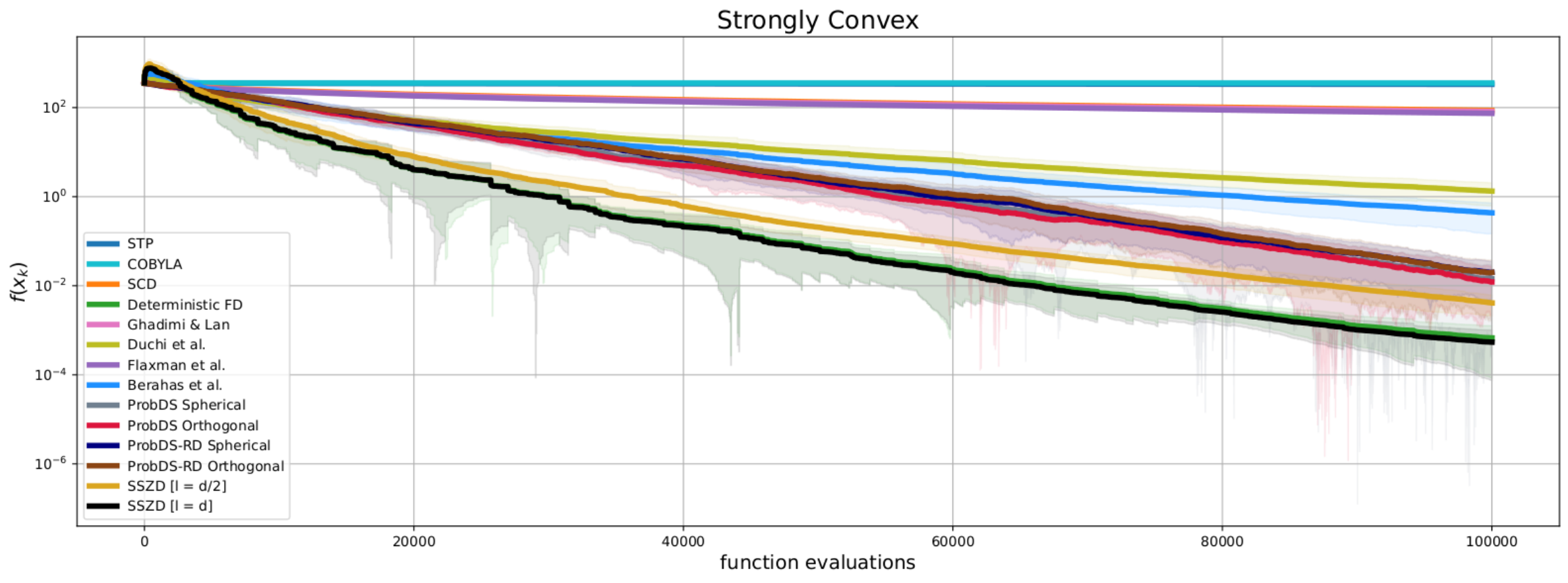}
    \includegraphics[width=0.9\linewidth]{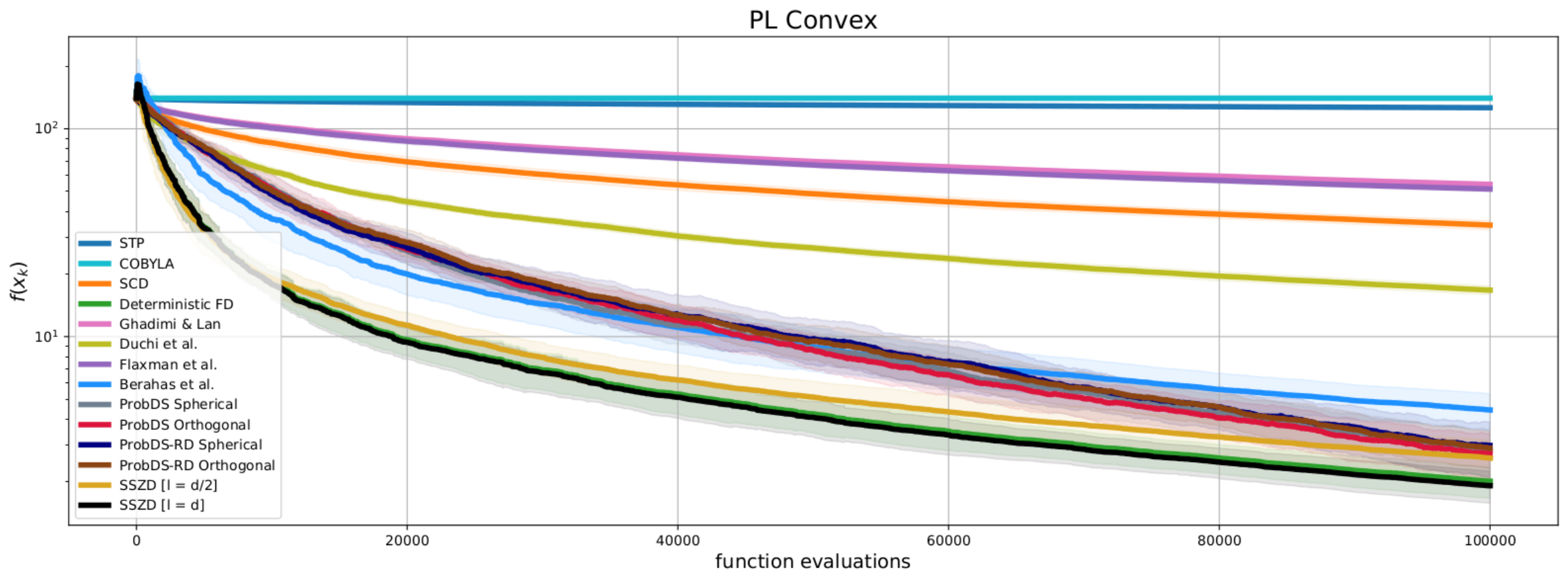}
    \includegraphics[width=0.9\linewidth]{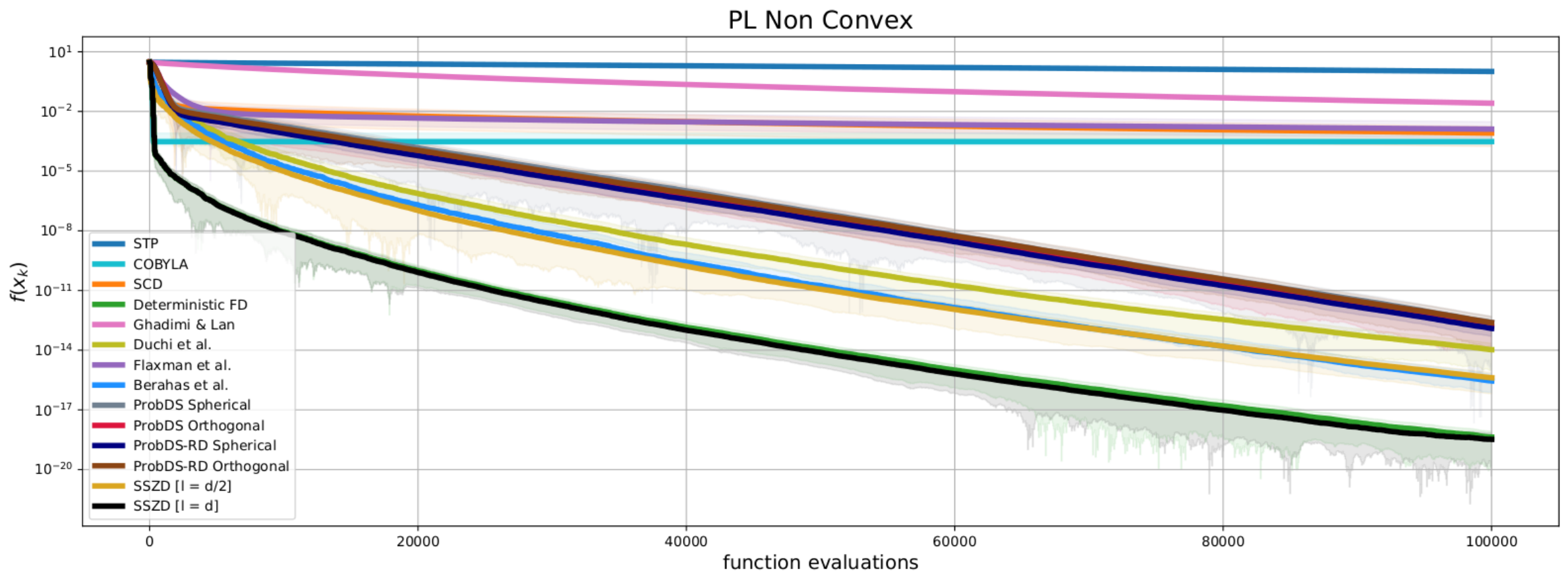}
	\caption{Comparison of SSZD with different algorithms on a strongly convex, a PL convex and a non-convex PL function.}
	\label{fig:synthetic}
\end{figure}
\noindent In Figure \ref{fig:synthetic}, we plot the objective function values with respect to the number of function evaluations. As we can observe S-SZD obtains the highest performances. In particular, we observe that S-SZD provide better performance than the other finite-differences methods. This result can be explained by observing that a gradient approximation built with Gaussian finite differences as in \cite{nest,sto_md} can be arbitrarily bad even when the directions chosen are near the direction of the gradient (this is because Gaussian distribution has infinite support). Note also that structured directions provide a better local exploration of the space than non-structured ones, reducing the probability to generate bad directions (i.e. when all directions chosen are far from the direction of the gradient). This result is also confirmed in terms of gradient accuracy in \cite{berahas2022theoretical} where the authors empirically show that to obtain a gradient accuracy comparable to methods that use orthogonal directions, smoothing methods (Gaussian or spherical) can require significantly more samples. Moreover, notice that S-SZD and other finite-difference methods perform an update every $l + 1$ function evaluations while direct search methods may (or not) need more function evaluations since the step size has to be reduced in order to find a descent direction. %
\paragraph*{Falkon Tuning} In this section, we consider the problem of parameter tuning for the large-scale kernel method Falkon \cite{rudi2017falkon}. Falkon is based on a \Nystrom{} approximation of kernel ridge regression. In these experiments, the goal is to minimize the hold-out cross-validation error with respect to two classes of parameters: the length-scale parameters of the Gaussian kernel $\sigma_1,\cdots,\sigma_d$, in each of the $d$ input dimensions; the regularization parameter $\lambda$. The dimension of the input space are $8$ and $9$ - see additional details in Appendix~\ref{app:flk_exp_details}. We report mean and standard deviation using $5$ repetitions.  
\begin{figure}[H]
	\centering
    \includegraphics[width=0.9\linewidth]{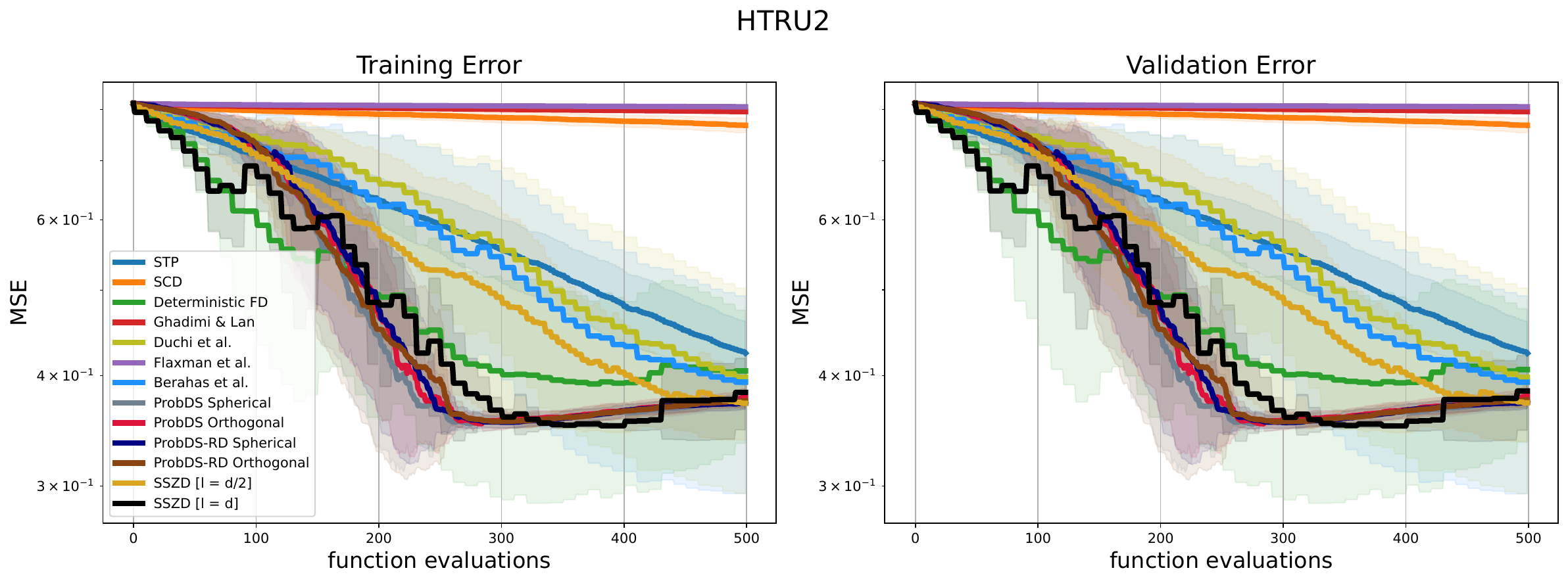}\\
    \includegraphics[width=0.9\linewidth]{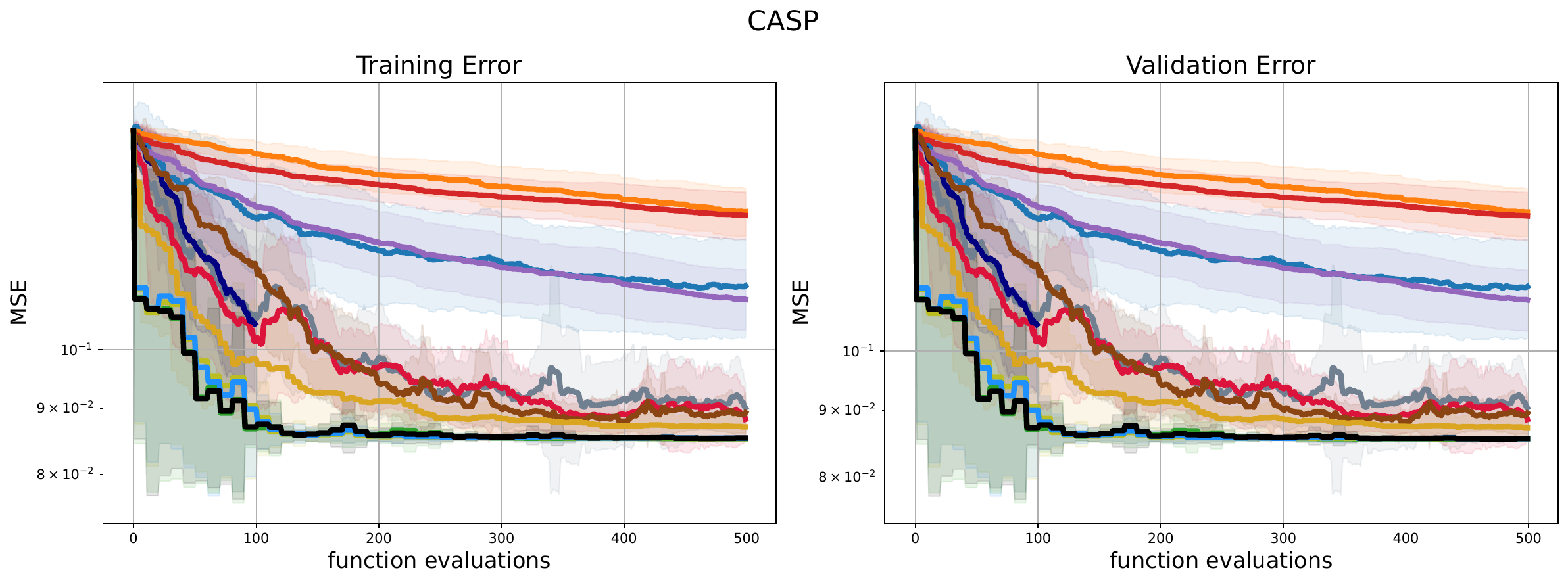}\\
    \includegraphics[width=0.9\linewidth]{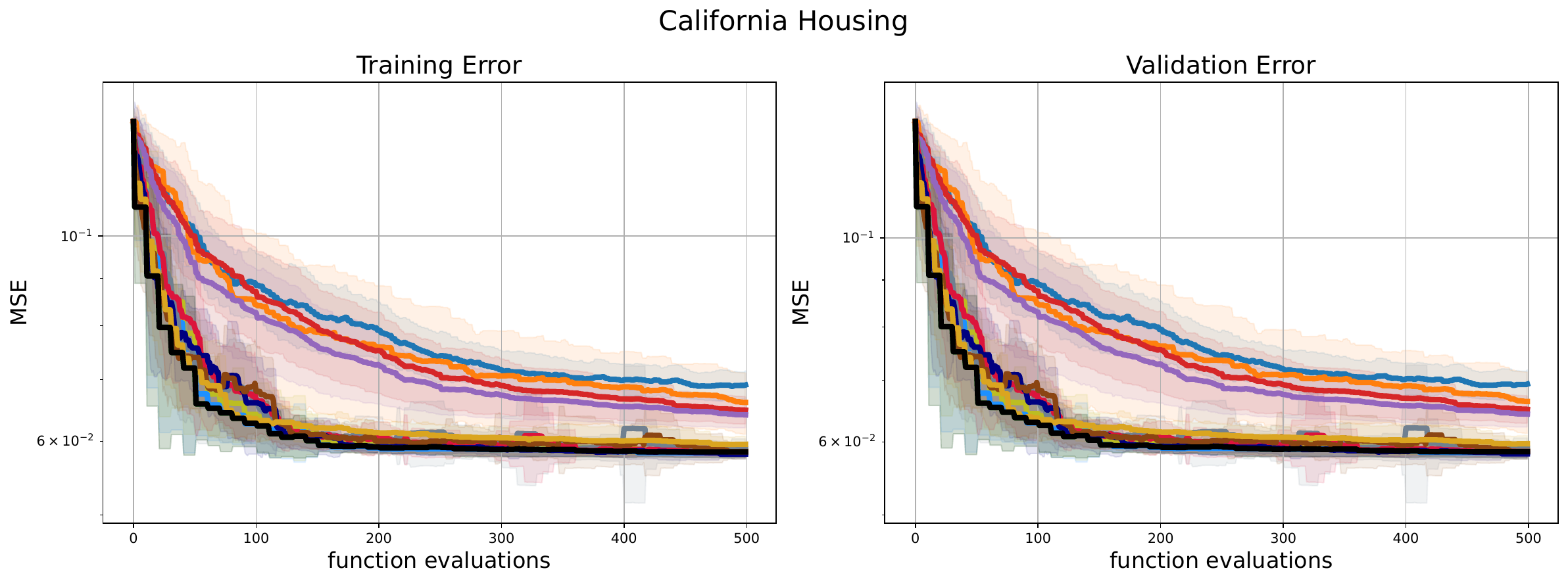}
	\caption{From left to right, training  and validation error obtained by tuning FALKON with different optimizers. From top to bottom, results for HTRU2, CASP and California Housing datasets}
	\label{fig:flk_tuning}
\end{figure}
\noindent From Figure~\ref{fig:flk_tuning}, we observe that Algorithm~\ref{alg:szo} provides the best performances considering the validation error. Despite the iterations of STP are cheaper (it performs $3$ function evaluations per iteration, while S-SZD needs $\ell+1>3$ function evaluations per iteration), STP does not perform well on the validation error. Comparing S-SZD with the other algorithms (STP excluded), we see that it achieves the best performances almost always. The test error obtained with Falkon tuned with S-SZD is similar to the best result of other methods - see also Table~\ref{tab:flk_tuning_te}. To conclude, our algorithm provides better performance than direct search methods in simulated experiments and behaves similarly in parameter tuning experiments. Notice that S-SZD achieves comparable performance when using $l = d / 2$, suggesting that for certain problems, employing all $d$ directions may be unnecessary.
\section{Conclusion}\label{sec:conclusions}
We presented and analyzed S-SZD, a stochastic zeroth-order algorithm for smooth minimization. We derived convergence rates for smooth convex functions and nonconvex functions, under  the Polyak-{\L}ojasiewicz assumption. We empirically compared our algorithm with a variety of zeroth order methods, and the numerical results suggest  that our algorithm outperforms direct search methods when the assumptions needed for convergence are satisfied. In addition, our algorithm works well also on practical parameter tuning experiments. Our work opens a number of possible research directions. In particular, an interesting question could be the development of an adaptive strategy to choose the direction matrices along the iterations. Moreover, another interesting question is to study adaptive choices of the stepsize and the discretization parameter. For instance, including line-search procedure.\\
\textbf{Acknowledgments \& Fundings.}
L. R. acknowledges the financial support of the European Commission (Horizon Europe grant ELIAS 101120237), the Ministry of Education, University and Research (FARE grant ML4IP R205T7J2KP) and the Center for Brains, Minds and Machines (CBMM), funded by NSF STC award CCF-1231216. S. V. acknowledges the support of the European Commission (grant TraDE-OPT 861137). The research by S. V. and C. M. has been supported by the MUR Excellence Department Project awarded to Dipartimento di Matematica, Universita di Genova, CUP D33C23001110001. L. R., S. V., M. R., C. M. acknowledge the financial support of the European Research Council (grant SLING 819789). L. R., S. V., C. M. acknowledge the support of the US Air Force Office of Scientific Research (FA8655-22-1-7034), the Ministry of Education, University and Research (grant BAC FAIR PE00000013 funded by the EU - NGEU) and MIUR (PRIN 202244A7YL). M. R., C. M. and S. V. are members of the Gruppo Nazionale per l’Analisi Matematica, la Probabilità e le loro Applicazioni (GNAMPA) of the Istituto Nazionale di Alta Matematica (INdAM). C. M. was also supported by the Programma Operativo Nazionale (PON) “Ricerca 
e Innovazione” 2014–2020. This work represents only the view of the authors. The European Commission and the other organizations are not responsible for any use that may be made of the information it contains.\\
\textbf{Data Availability.} The HTRU2 \cite{htru2,Dua:2019} and CASP \cite{Dua:2019} datasets used in Section \ref{sec:experiements} are available in the following UCI repositories:
\begin{itemize}
    \item HTRU2: \url{https://archive.ics.uci.edu/dataset/372/htru2}
    \item CASP: \url{https://archive.ics.uci.edu/dataset/265/physicochemical+properties+of+protein+tertiary+structure}
\end{itemize}
While California Housing dataset is a standard scikit-learn dataset - see \url{https://inria.github.io/scikit-learn-mooc/python_scripts/datasets_california_housing.html}. Pre-processing operations are described in Appendix \ref{app:flk_exp_details}.\\
\textbf{Code Availability.} The code used to perform the experiments is provided in the supplemental material. Falkon library is available at the following Github repository \url{https://github.com/FalkonML/falkon}. Additional details are provided in Appendix \ref{app:flk_exp_details}.
\section*{Declarations}
\textbf{Conflict of Interest.} The authors have no relevant financial or non-financial interests to disclose. Furthermore,
the authors have no competing or conflict of interests to declare that are relevant to the content of this article
\appendix

\section{Preliminary results}\label{app:aux}
In the appendices, we provide the preliminary results used to prove the main theorems. 

\noindent {\bf Notation.} We use the following filtrations: 
\begin{itemize}
\item $\mathcal{F}_k=\sigma(Z_1,\ldots,Z_{k-1})$;
\item $\mathcal{G}_k=\sigma(P_1, \cdots, P_{k-1})$;
\item $\mathcal{H}_k=\sigma(P_1, Z_1, \cdots, P_{k - 1}, Z_{k - 1})$.
\end{itemize}
To improve readability, we use the following abbreviations: 
\begin{itemize}
    \item $F_k = F(x_k, Z_k)$ and $D_k F_k = D_{(P_k, h_k)} F(x_k, Z_k)$
    \item $f_k= f(x_k)$ and  $\nabla f_k = \nabla f(x_k)$. 
\end{itemize}

\begin{lemma}\label{lem:lip}
Suppose that Assumptions~\ref{ass:lam_smooth}(i)  holds true. If $ \mathbb{E}_Z [\|\nabla F(x,Z)\| ] <+\infty$  for every $x\in\mathbb{R}^d$, then $f$ is differentiable with, for every $x\in\mathbb{R}^d$,
\[
\nabla f(x)=\E_Z[\nabla F(x,Z)]
\]
and $\nabla f$ is $\lambda$-Lipschitz continuous. 
\end{lemma}
\begin{proof}
    Let $x\in\mathbb{R}^d$, $z\in\mathcal{Z}$, $t\in (0,1]$ and let $e_i$ be the $i$-th coordinate vector. The Descent Lemma (see \cite{polyak1987introduction}) yields
\[\frac{\vert F(x+t e_i,z)-F(x,z)\vert}{t} \leq {\vert \langle \nabla F(x,z), e_i \rangle\vert + \frac{t\lambda}{2}} \leq \|\nabla F(x,z)\|+ \frac{\lambda}{2}.
\]
Since $ \mathbb{E}_Z [\|\nabla F(x,Z)\| ] <+\infty$ by assumption, the dominated convergence theorem implies that 
\begin{align*}
    \nabla_{x_i}f(x)=& \lim_{t\to 0} \frac{\E_Z[F(x+t e_i,Z)-F(x,Z)]}{t} \\ =& \E_Z\left[\lim_{t\to 0}\frac{F(x+t e_i,Z)-F(x,Z)}{t}\right]\\ =& \E_Z\left[\nabla_{x_i}F(x,Z) \right]. 
    \end{align*}
    The Lipschitz continuity of $\nabla f$  follows directly  from Assumption~\ref{ass:lam_smooth}(i).
\end{proof}

\noindent The next lemma is a direct consequence of Assumption~\ref{ass:pk_exp}.
\begin{lemma}\label{lemma:pk}
	Let $P\in \R^{d \times l}$ be a random matrix satisfying Assumption~\ref{ass:pk_exp} and let $v \in \R^l$ and $w \in \R^d$.
	Then,
	\begin{equation*}
		\|P v\|^2 \stackrel{\text{a.s.}}{=} \frac{d}{l} \nor{v}^2 \quad \text{and} \quad \E \nor{P ^\intercal w}^2 = \nor{w}^2.	
	\end{equation*} 
\end{lemma}
\begin{proof} 
Since $P^\intercal P=d/\ell I$ almost surely, we have $\|Pv\|^2=\langle v, P^TPv \rangle=(d/l) \|v\|^2$.
Moreover, by the assumption $\E[P^\intercal P]=I$,
\[\E \nor{P^\intercal w}^2 = \E[\langle w, P^\intercal P w\rangle]=\langle w, \E[P^\intercal P] w\rangle = \|w\|^2.\]

\end{proof}
\noindent Next we bound the  error between our surrogate and the stochastic gradient.	
\begin{lemma}[Bound on error norm]\label{lem:err_norm_bound}
	Let $z\in\mathcal{Z}$, $F$ a function satisfying Assumption~\ref{ass:lam_smooth} and $P$ a matrix satisfying Assumption~\ref{ass:pk_exp}. Then, $\forall x \in \R^d$, a.s.
	\begin{equation*}
		\nor{D_{(P, h)}F(x, z) - P^\intercal \nabla F(x, z)}{\leq} \frac{\lambda d h}{2 \sqrt{l}} .
	\end{equation*} 
\end{lemma}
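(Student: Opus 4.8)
The plan is to reduce everything to a coordinate-wise estimate and then collect the $l$ terms. First I would observe that, by definition of the two vectors involved, their $i$-th coordinates are
\begin{equation*}
\left[D_{(P,h)}F(x,z) - P^\intercal \nabla F(x,z)\right]_i = \frac{F(x+hp^{(i)},z) - F(x,z)}{h} - \scalT{\nabla F(x,z)}{p^{(i)}},
\end{equation*}
where $p^{(i)}$ is the $i$-th column of $P$. So the whole problem is to bound, for each $i$, the gap between the exact directional increment of $F(\cdot,z)$ along $hp^{(i)}$ and its first-order Taylor term.

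The key step is the standard quadratic upper bound (descent lemma) implied by Assumption~\ref{ass:lam_smooth}: since $x \mapsto F(x,z)$ has $\lambda$-Lipschitz gradient, for every $u,v \in \R^d$ one has $|F(v,z) - F(u,z) - \scalT{\nabla F(u,z)}{v-u}| \le \tfrac{\lambda}{2}\nor{v-u}^2$. Applying this with $u = x$ and $v = x + hp^{(i)}$, dividing by $h>0$, gives
\begin{equation*}
\left|\left[D_{(P,h)}F(x,z) - P^\intercal \nabla F(x,z)\right]_i\right| \le \frac{\lambda h}{2}\nor{p^{(i)}}^2 .
\end{equation*}
Then I would invoke the a.s.\ identity $P^\intercal P = \tfrac{d}{l} I$ from Assumption~\ref{ass:pk_exp}: its $i$-th diagonal entry is exactly $\nor{p^{(i)}}^2$, so $\nor{p^{(i)}}^2 \stackrel{\text{a.s.}}{=} d/l$ for every $i$. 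This is the only place where the structural assumption on $P$ enters.

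Finally I would assemble the bound: squaring and summing the $l$ coordinate estimates,
\begin{equation*}
\nor{D_{(P,h)}F(x,z) - P^\intercal \nabla F(x,z)}^2 \stackrel{\text{a.s.}}{\le} \sum_{i=1}^l \left(\frac{\lambda h}{2}\cdot\frac{d}{l}\right)^2 = l\cdot\frac{\lambda^2 h^2 d^2}{4l^2} = \frac{\lambda^2 d^2 h^2}{4l},
\end{equation*}
and taking square roots yields the claimed bound $\lambda d h/(2\sqrt{l})$. I do not anticipate a genuine obstacle here; the only points requiring care are using the two-sided form of the descent lemma (not just convex-type one-sided bounds, since $F(\cdot,z)$ is not assumed convex in this lemma) and correctly reading off the column norms from the a.s.\ identity rather than from the expectation identity $\E[PP^\intercal]=I$.
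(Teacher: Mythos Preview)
Your proposal is correct and follows essentially the same argument as the paper: a coordinate-wise application of the two-sided descent lemma, the identification $\nor{p^{(i)}}^2 = d/l$ from the a.s.\ identity $P^\intercal P = (d/l)I$, and then summing the $l$ squared bounds. Your remarks about needing the two-sided (not one-sided) quadratic bound and about using the a.s.\ identity rather than $\E[PP^\intercal]=I$ are exactly the right points of care.
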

\begin{proof}
	By the definition of the norm, for every $x \in \R^d$ and for every $z\in\mathcal{Z}$, a.s.
	\begin{equation}\label{eqn:nor_err}
		\nor{D_{(P, h)} F(x, z) - P^\intercal \nabla F(x, z)} {=} \sqrt{\sum\limits_{i=1}^l \Big( [D_{(P, h)} F(x, z)]_i - \scalT{\nabla F(x, z)}{p^{(i)}} \Big)^2}.
	\end{equation}
	By the Descent Lemma \cite{polyak1987introduction} and Proposition \ref{lemma:pk}, we have that a.s.
 \begin{eqnarray*}
		 \Big|[D_{(P, h)} F(x, z)]_i - \scalT{\nabla F(x, z)}{p^{(i)}} \Big| && {=} \frac{1}{h}\Big|F(x + h p^{(i)}, z) - F(x, z) - h \scalT{\nabla F(x, z)}{p^{(i)}}\Big|\\
  && \leq  \frac{\lambda h }{2 }\nor{p^{(i)}}^2=\frac{\lambda h d}{2 l}.
	\end{eqnarray*}
	Using this inequality in equation \eqref{eqn:nor_err}, we get the claim. 
\end{proof}
\noindent The above lemma  states that the distance between the finite difference approximations and the directional derivatives is bounded and  goes to $0$ when $h \rightarrow 0$. In the next lemma, we show a bound related to the function value decrease along the iterations.
\begin{proposition}
    [Functional value estimate]\label{prop:abg_inst_fun_dec}
	Under assumptions \ref{ass:lam_smooth}  and \ref{ass:pk_exp} let  $(x_k)$ be the random sequence generated by Algorithm~\ref{alg:szo}. Define $f_k := f(x_k)$ and the sequence
    \begin{equation*}
        C_k := \frac{\lambda  d}{l} \alpha_k^2 G+ \alpha_kh_k^2\Big(\frac{\lambda d }{2\sqrt{l}}\Big)^2\Big(\frac{1}{2}+\frac{\lambda d}{l}\alpha_k\Big).
    \end{equation*}
    Then, for every $k\in\N$,
   \begin{equation}\label{eqn:abg_fv_eq}
        \begin{aligned}        
           \left[\E_{(P_k, Z_k)} [f_{k+1}|\mathcal{H}_k] - f^*\right] - & \left(1 + \frac{\lambda d A}{l}\alpha_k^2\right) \left[f_k - f^*\right] \leq  \frac{\alpha_k}{2} \Big( \frac{2\lambda d B}{l}\alpha_k-1 \Big) \nor{\nabla f_k}^2 + C_k.
        \end{aligned}
	\end{equation}
\end{proposition}
\begin{proof}
Note that, as $x_k$ depends on $P_0, Z_0,\cdots,P_{k-1}, Z_{k-1}$,  $f_k$ is a random variable. 
By the Descent Lemma, Algorithm~\ref{alg:szo} and Proposition \ref{lemma:pk} we have a.s.

	\begin{equation*}
		\begin{aligned}
			f_{k + 1} - f_k &\leq \scalT{\nabla f_k}{x_{k+1} - x_k} + \frac{\lambda}{2} \nor{x_{k+1} - x_k}^2\\
			&\leq %
		\underbrace{-\alpha_k \scalT{P_k^\intercal \nabla f_k}{D_k F_k}}_{\textbf{=:a}} + \underbrace{\frac{\lambda  d}{2l} \alpha_k^2 \nor{D_k F_k}^2}_{\textbf{=:b}}.
		\end{aligned}
	\end{equation*}
	\paragraph*{Bound on a} Denote $e_k := D_k F_k - P_k^\intercal \nabla F_k$. Thus
	\begin{equation*}
		\begin{aligned}
			-\alpha_k \scalT{P_k^\intercal \nabla f_k}{D_k F_k} &= -\alpha_k \scalT{P_k^\intercal \nabla f_k}{e_k} - \alpha_k \scalT{P_k^\intercal \nabla f_k}{P_k^\intercal \nabla F_k}.
		\end{aligned}
	\end{equation*}
	\paragraph*{Bound on b} Adding and subtracting $P_k^\intercal \nabla F_k$ and using Lemma~\ref{lem:err_norm_bound}, we have
	\begin{equation*}
		\begin{aligned}
			\frac{\lambda  d}{2l} \alpha_k^2 \nor{D_k F_k - P_k^\intercal \nabla F_k + P_k^\intercal \nabla F_k}^2 &\leq \frac{\lambda  d}{2l}  \Big(\frac{\lambda d }{2 \sqrt{l}} \Big)^2 \alpha_k^2 h_k^2 + \frac{\lambda  d}{2l} \alpha_k^2 \nor{P_k^\intercal \nabla F_k}^2\\
			&+ \frac{\lambda  d}{l} \alpha_k^2 \scalT{e_k}{P_k^\intercal \nabla F_k}.
		\end{aligned}
	\end{equation*}
We have
	\begin{equation*}
		\begin{aligned}
			f_{k+1} - f_k &\leq -\alpha_k \scalT{P_k^\intercal \nabla f_k}{e_k} - \alpha_k \scalT{P_k^\intercal \nabla f_k}{P_k^\intercal \nabla F_k} + \frac{\lambda  d}{2l}  \Big( \frac{\lambda d }{2\sqrt{l}} \Big)^2 \alpha_k^2 h_k^2\\ 
			&+ \frac{\lambda  d}{2l} \alpha_k^2 \nor{P_k^\intercal \nabla F_k}^2 +\frac{\lambda  d}{l} \alpha_k^2 \scalT{e_k}{P_k^\intercal \nabla F_k }.
		\end{aligned}
	\end{equation*}
	By Cauchy-Schwartz inequality and Lemma~\ref{lem:err_norm_bound}, we get
	\begin{equation*}
		\begin{aligned}
			f_{k+1} - f_k &\leq \alpha_k \vert \scalT{P_k^\intercal \nabla f_k}{e_k}\vert - \alpha_k \scalT{P_k^\intercal \nabla f_k}{P_k^\intercal \nabla F_k}\\
			&\quad + \frac{\lambda  d}{2l}  \Big( \frac{\lambda d}{2\sqrt{l}} \Big)^2 \alpha_k^2 h_k^2 + \frac{\lambda  d}{2l} \alpha_k^2 \nor{P_k^\intercal \nabla F_k}^2\\
			&\quad +\frac{\lambda  d}{l} \Big( \frac{\lambda d}{2\sqrt{l}} \Big) \alpha_k^2 h_k \nor{P_k^\intercal \nabla F_k}.
		\end{aligned}
	\end{equation*}
	From Young's inequality we derive that   $ \vert \scalT{P_k^\intercal \nabla f_k}{e_k}\vert \leq \frac{1}{2}\|P_k^T\nabla f_k\|^2+\frac{1}{2}\Big(\frac{\lambda d}{2\sqrt{l}}\Big)^2h_k^2$ and $\Big( \frac{\lambda d }{2\sqrt{l}} \Big)h_k\|P_k^T\nabla F_k\|\leq \Big( \frac{\lambda d }{2\sqrt{l}} \Big)^2\frac{h_k^2}{2}+\frac{1}{2}\|P_k^T\nabla F_k\|^2$, therefore we have that
	\begin{equation*}
		\begin{aligned}
			f_{k+1} - f_k &\leq \frac{\alpha_k}{2} \nor{P_k^\intercal \nabla f_k}^2 + \frac{\alpha_k}{2} \Big( \frac{\lambda d }{2\sqrt{l}} \Big)^2 h_k^2\\
			&- \alpha_k \scalT{P_k^\intercal \nabla f_k}{P_k^\intercal \nabla F_k} + \frac{\lambda  d}{2l}  \Big( \frac{\lambda d}{2\sqrt{l}} \Big)^2 \alpha_k^2 h_k^2 + \frac{\lambda  d}{2l} \alpha_k^2 \nor{P_k^\intercal \nabla F_k}^2\\
			&+\frac{\lambda  d}{2l} \Big( \frac{\lambda d }{2\sqrt{l}} \Big)^2 \alpha_k^2 h_k^2 + \frac{\lambda  d}{2l} \alpha_k^2 \nor{P_k^\intercal\nabla F_k}^2.
		\end{aligned}
	\end{equation*}
	Observe that $\scalT{P_k^\intercal \nabla f_k}{P_k^\intercal \nabla F_k} = \scalT{P_k P_k^\intercal \nabla f_k}{ \nabla F_k}$. Taking the expectation with respect to $P_k$ conditioned on $\mathcal{H}_k$ and $\sigma(Z_k)$, Proposition~\ref{lemma:pk} implies
	\begin{equation*}
		\begin{aligned}
			\E_{P_k}[f_{k+1}|\mathcal{H}_k, \sigma(Z_k)] - f_k &\leq \frac{\alpha_k}{2} \nor{\nabla f_k}^2 + \frac{\alpha_k}{2} \Big( \frac{\lambda d }{2\sqrt{l}} \Big)^2 h_k^2\\
			&- \alpha_k \scalT{\nabla f_k}{ \nabla F_k}+ \frac{\lambda  d}{l} \Big( \frac{\lambda d}{2\sqrt{l}} \Big)^2 \alpha_k^2 h_k^2 + \frac{\lambda  d}{l} \alpha_k^2 \nor{\nabla F_k}^2.
		\end{aligned}
	\end{equation*}
Taking the conditional expectation with respect to $Z_k$ conditioned on the filtration $\mathcal{H}_k$, we get
    \begin{equation*}
		\begin{aligned}
			\E_{Z_k}[\E_{P_k}[f_{k+1}|\mathcal{H}_k, \sigma(Z_k)]\,|\, \mathcal{H}_k] - f_k &\leq \frac{\alpha_k}{2} \nor{\nabla f_k}^2 + \alpha_kh_k^2\Big(\frac{\lambda d }{2\sqrt{l}}\Big)^2\Big(\frac{1}{2}+\frac{\lambda d}{l}\alpha_k\Big)\\
			&- \alpha_k \| \nabla f_k \|^2 + \frac{\lambda  d}{l} \alpha_k^2 \E_{Z_k}[\nor{\nabla F_k}^2 \, |\, \mathcal{H}_k].
        \end{aligned}
    \end{equation*}
        By Fubini's Theorem, we have
    \begin{equation*}
        \E_{Z_k}[\E_{P_k}[f_{k+1}|\mathcal{H}_k, \sigma(Z_k)]\,|\, \mathcal{H}_k] = \E_{(P_k, Z_k)} [f_{k+1}|\mathcal{H}_k].
    \end{equation*}
    Then, by Assumption \ref{ass:lam_smooth}, we have
    \begin{equation*}
        \begin{aligned}        
           \E_{(P_k, Z_k)} [f_{k+1}|\mathcal{H}_k] - f_k &\leq-\frac{\alpha_k}{2}\nor{\nabla f_k}^2 + \alpha_kh_k^2\Big(\frac{\lambda d }{2\sqrt{l}}\Big)^2\Big(\frac{1}{2}+\frac{\lambda d}{l}\alpha_k\Big)\\
			&+ \frac{\lambda  d}{l} \alpha_k^2 \Bigg( A(f_k - f^*) + B \| \nabla f_k \|^2 + G \Bigg).
        \end{aligned}
	\end{equation*}
    Adding and subtracting $f^*$, we get the claim.
 \end{proof}

 \section{Convex case}\label{app:proofs_abg_cond}
\begin{proposition}[Iterates norm estimate]\label{prop:fej}
	Under Assumptions \ref{ass:lam_smooth}, \ref{ass:pk_exp} and \ref{ass:fun_conv}, suppose also that $\alpha_k\leq \bar{\alpha}$ with $\bar{\alpha} d (A+\lambda B)/ l <1$. Let $(x_k)$ be the random sequence generated by Algorithm~\ref{alg:szo}. For every $k \in \N$, set
 $$D_k := \frac{ d \alpha_k}{l}\Big( 
	\frac{ \lambda^2  d^2}{2l}\alpha_k h_k^2 + 2 G \alpha_k + \frac{\lambda^2 \sqrt{d}}{2}h_k\Big).$$
 Let $\delta\in[0,1-\bar{\alpha} d (A+\lambda B)/l]$ and set $\Delta:=\frac{2}{\lambda}\left[1-\delta- \bar{\alpha}d(A+\lambda B)/l\right]$. Then $\Delta \geq 0$ and, for every $\bar{x} \in \argmin f$, denoting $u_{k}:=\|x_k-\bar{x}\|$, we have that 
 \begin{equation}\label{eqn:abg_fej1}
		\begin{aligned}
			\E_{(P_k, Z_k)}[u_{k + 1}^2 | \mathcal{H}_k] - \left(1 + \frac{d\sqrt{d}}{2l} \alpha_k h_k\right) u_k^2 
			+2\delta\alpha_k(f_k-f^*)+ \Delta \alpha_k \nor{\nabla f_k}^2
			&\leq D_k.
		\end{aligned}
	\end{equation}
\end{proposition}
\begin{proof}
Let $\bar{x}\in\argmin f$. Using the definition of $x_{k+1}$ in Algorithm~\ref{alg:szo}  we derive 
	\begin{equation*}
		\nor{x_{k+1} - \bar{x}}^2 - \nor{x_k - \bar{x}}^2 = \underbrace{\alpha_k^2\nor{ P_k D_k F_k}^2}_{\text{\bf =:a}} +  \underbrace{2\alpha_k \scalT{P_k D_k F_k}{\bar{x} - x_k}}_{\text{\bf =:b}}.
	\end{equation*}
	Denoting $e_k := D_k F_k - P_k^\intercal \nabla F_k$, we bound separately the two terms. 
	To bound {\bf a}, we add and subtract $P_k^\intercal \nabla F_k$ and we use Proposition \ref{lemma:pk}. We get that a.s.
	\begin{equation*}
		\nor{ P_k D_k F_k}^2=\frac{ d}{l} \nor{D_k F_k - P_k^\intercal \nabla F_k + P_k^\intercal \nabla F_k}^2 \leq \frac{2 d }{l} \nor{e_k}^2 + \frac{2 d}{l} \nor{P_k^\intercal \nabla F_k}^2. 
	\end{equation*}
	Moreover, it follows from Lemma~\ref{lem:err_norm_bound} that $ \nor{e_k}^2\leq ({\lambda d}/{2 \sqrt{l}})^2 h_k^2$. To bound {\bf b}, we add and subtract $P_k P_k^\intercal \nabla F_k$. We derive that a.s.
	\begin{equation*}
		\begin{aligned}
			\scalT{P_k D_k F_k}{\bar{x} - x_k} &= 		\scalT{P_k P_k^\intercal \nabla F_k}{\bar{x} - x_k}
			+ \scalT{P_k e_k}{\bar{x} - x_k}\\
			&\leq \scalT{P_k P_k^\intercal \nabla F_k}{\bar{x} - x_k}+  \sqrt{\frac{d}{l}} \Big( \frac{\lambda d}{2 \sqrt{l}} \Big) h_k \nor{\bar{x} - x_k}.
		\end{aligned}
	\end{equation*}
	In the last inequality we used Cauchy-Schwartz, Proposition \ref{lemma:pk} and Lemma~\ref{lem:err_norm_bound}. Now recall the definition of $u_{k}:=\|x_k-\bar{x}\|$. The previous upper-bounds imply that a.s.
	\begin{equation*}
		\begin{aligned}
			u_{k + 1}^2 - u_k^2 &\leq \frac{2 d}{l}  \Bigg( \frac{\lambda d }{2 \sqrt{l}} \Bigg)^2 \alpha_k^2h_k^2+ \frac{2 d}{l} \alpha_k^2 \nor{P_k^\intercal \nabla F_k}^2 + 2\alpha_k \scalT{P_k P_k^\intercal \nabla F_k}{\bar{x} - x_k}\\
			&+ 2 \sqrt{\frac{d}{l}} \Big( \frac{\lambda d }{2 \sqrt{l}} \Big) \alpha_k h_k u_k.
		\end{aligned}
	\end{equation*}
	Taking the expectation with respect to $P_k$ conditioned on $\mathcal{H}_k$ and $\sigma(Z_k)$ and using Proposition~\ref{lemma:pk}, we get that a.s.
	\begin{equation*}
		\begin{aligned}
			\E_{P_k}[u_{k + 1}^2| \mathcal{H}_k, \sigma(Z_k)] - u_k^2 &\leq \frac{2 d}{l} \Bigg( \frac{\lambda d}{2 \sqrt{l}} \Bigg)^2 \alpha_k^2 h_k^2+ \frac{2 d}{l} \alpha_k^2 \nor{\nabla F_k}^2 + 2\alpha_k \scalT{\nabla F_k}{\bar{x} - x_k}\\
			&+ 2 \sqrt{\frac{d}{l}} \left(\frac{\lambda d}{2 \sqrt{l}} \right)\alpha_k h_k u_k.
		\end{aligned}
	\end{equation*}
	Define
 \begin{equation} \label{eq:defck}
		W_k :=  \frac{\lambda^2  d^3}{2l^2}\alpha_k^2 h_k^2 + \frac{2 d G}{l} \alpha_k^2 \quad \text{and} \quad \rho_k :=  \frac{\lambda d \sqrt{d}}{l} \alpha_k h_k.
	\end{equation}
 Taking the expectation with respect to $Z_k$ conditioned on $\mathcal{H}_k$, using Assumption \ref{ass:lam_smooth} (iii) and that $\alpha_k \leq \bar{\alpha}$, we derive that
\begin{equation*}
		\begin{aligned}
			\E_{Z_k} [\E_{P_k} [u_{k + 1}^2 | \mathcal{H}_k, \sigma(Z_k)] | \mathcal{H}_k] - u_k^2 &\leq  W_k +\frac{2  d\bar{\alpha}}{l} \alpha_kA(f_k-f^*)+ \frac{2  d\bar{\alpha} }{l} \alpha_k B \nor{\nabla f_k}^2 \\
			&\quad  + 2\delta \alpha_k \scalT{\nabla f_k}{\bar{x} - x_k} + 2\alpha_k(1-\delta) \scalT{\nabla f_k}{\bar{x} - x_k}+\rho_k u_k,
		\end{aligned}
	\end{equation*}
where the previous inequality holds for every $\delta$ and so in particular for $\delta$ chosen between $0$ and $1-d (A+\lambda B)\bar{\alpha}/l$ as in the assumptions. Note that $u_k^2$ is integrable (the proof follows the same line of \cite[Remark 23]{kozak2021zeroth}). By Fubini's Theorem we have 
    \begin{equation*}
 \E_{Z_k} [ \E_{P_k}[u_{k + 1}^2 | \mathcal{H}_k, \sigma(Z_k)] | \mathcal{H}_k] = \E_{(P_k, Z_k)}[u_{k + 1}^2 | \mathcal{H}_k].       
    \end{equation*}
Moreover, by convexity of $f$ we have both that $A(f_k-f^*)\leq -A\scalT{\nabla f_k}{\bar{x}-x_k}$ and that $\delta \scalT{\nabla f_k}{\bar{x}-x_k}\leq \delta(f^*-f_k)$. Then,
\begin{equation*}
		\begin{aligned}
			\E_{(P_k, Z_k)}[u_{k + 1}^2 | \mathcal{H}_k] - u_k^2 + & 2\delta \alpha_k(f_k-f^*) \leq W_k+\rho_k u_k+ \frac{2  d\bar{\alpha}}{l} \alpha_k B \nor{\nabla f_k}^2+\\
   & + 2\alpha_k \left[ 1-\delta -\frac{d\bar{\alpha} }{l} A \right] \scalT{\nabla f_k}{\bar{x} - x_k}.
		\end{aligned}
	\end{equation*}
Since $\delta\leq  1-d (A+\lambda B) \bar{\alpha}/l\leq 1-d \bar{\alpha}A/l$, then $1-\delta-d\bar{\alpha}A/l\geq  0$. Then, the $(1/\lambda)-$cocoercivity of $\nabla f$ and the definition of $\Delta$ yield 
 \begin{equation}\label{est_1}
		\begin{aligned}
			\E_{(P_k, Z_k)}[u_{k + 1}^2 | \mathcal{H}_k] - u_k^2 + & 2\delta \alpha_k(f_k-f^*) +\Delta \alpha_k \|\nabla f_k\|^2 \leq W_k+\rho_k u_k.
		\end{aligned}
	\end{equation}   
Now, using  Young's inequality with parameter $\tau_k$, we obtain
 \begin{equation}\label{eqn:fej2}
		\begin{aligned}
			\rho_k u_k
   \leq \frac{1}{4\tau_k} \rho_k^2  + \tau_k u_k^2.
		\end{aligned}
	\end{equation} 
 Finally, setting $\tau_k=d\sqrt{d} \alpha_k h_k/(2l)$, we get the claim.
 \end{proof}
\noindent Lemma~\ref{prop:fej} is the extension to the stochastic setting of \cite[Lemma 2]{kozak2021zeroth}. The difference between equation \eqref{eqn:abg_fej1} and \cite[Lemma 2]{kozak2021zeroth} is the presence of extra terms related to the stochastic information combined with the \eqref{eq:ABG} condition. 
 \begin{proposition}\label{prop_convx}
	Let Assumptions  \ref{ass:lam_smooth},
    \ref{ass:pk_exp}, \ref{ass:step_disc} and \ref{ass:fun_conv} hold and suppose also that $\alpha_k\leq \bar{\alpha}$ with $\bar{\alpha} d (A+\lambda B) / l <1$. Let $(x_k)$ be the random sequence generated by Algorithm~\ref{alg:szo}. Then the sequence $\nor{x_{k} - \bar{x}}$ is a.s. convergent for every $\bar{x}\in \argmin f$, a.s.
	\begin{equation*}
	\alpha_k \nor{\nabla f_k}^2 \in \ell^1, \quad \alpha_k (f_k - f^*) \in \ell^1,  \qquad \lim\limits_{k} f_k = f^*
	\end{equation*}
	and there exists a random variable $x^*$ taking values in $\argmin f$ such that $x_k \rightarrow x^*$ a.s.
 \end{proposition}
 \begin{proof}
We use the estimate provided by Proposition \ref{prop:fej} with a value of $\delta>0$ such that $\Delta > 0$. By Assumption~\ref{ass:step_disc}, $d\sqrt{d} \alpha_k h_k/(2l)$ and $D_k$ belong to $\ell^1$. 
 Then Robbins-Siegmund Theorem \cite{ROBBINS1971233} implies that, for every $\bar{x}\in \argmin f$, the sequence $\nor{x_{k} - \bar{x}}$ is a.s. convergent for $k \to +\infty$ and that $\alpha_k \nor{\nabla f_k}^2$ and $\alpha_k(f_k-f^*)$ belong to $\ell^1$ a.s.	 By Assumption~\ref{ass:step_disc}, we have also that the sequence $\alpha_k$ does not belong to $\ell^1$. Then, $\alpha_k(f_k-f^*)\in \ell^1$ a.s. implies that a.s.
	\begin{equation}\label{eqn:liminf}
		\liminf\limits_{k} f_k = f^*.
	\end{equation}
 Now consider the estimate in Proposition~\ref{prop:abg_inst_fun_dec}. By Assumption \ref{ass:step_disc}, $\lambda d A \alpha_k^2/l$ and $C_k$ belong to $\ell^1$. Moreover, we know that $\alpha_k^2 \nor{\nabla f_k}^2\leq \bar{\alpha} \alpha_k \nor{\nabla f_k}^2 \in \ell^1$ a.s. Then, Robbins-Siegmund Theorem implies that $f_k$ is a.s. convergent for $k\to +\infty$. Jointly with equation~\eqref{eqn:liminf}, it yields that a.s. $$\lim_k  f_k = f^*.$$ 
 In particular, every a.s. cluster point of the sequence $(x_k)$ is a.s. a minimizer of the function $f$. Recall also that $\nor{x_{k} - \bar{x}}$ is a.s. convergent for every $\bar{x} \in \argmin f$. Then, by Opial's Lemma,  there exists a random variable $x^* $ taking values in $\argmin f$ such that $x_k \rightarrow x^*$ a.s.
\end{proof}
\noindent Now, to get convergence rates from the previous energy estimation, we need to upper bound the quantity $\sqrt{\E[\nor{x_k-\bar{x}}^2]}$ for $\bar{x} \in \argmin f$. 
\begin{lemma}\label{lem:bih}
	Let Assumptions  \ref{ass:lam_smooth}, 
 \ref{ass:pk_exp} and \ref{ass:fun_conv} hold and suppose also that $\alpha_k\leq \bar{\alpha}$ with $\bar{\alpha} d (A+\lambda B)/ l <1$. Let $(x_k)$ be the random sequence generated by Algorithm~\ref{alg:szo}. Let $\bar{x} \in \argmin f$ and define the sequences $W_k, \rho_k$ as in \eqref{eq:defck}. Moreover, define
 \begin{equation}\label{eq:bih1}
	    \begin{aligned}
	        S_k := \nor{x_0 - \bar{x}}^2 + \sum\limits_{j=0}^{k} W_j.
	    \end{aligned}
	\end{equation}
 Then we have that, for all $k \in \N$,
 \begin{equation}\label{eq:bih0}
		\sqrt{\E[\nor{x_k - \bar{x}}^2]} \leq \sqrt{S_{k}} + \sum\limits_{j=0}^{k}\rho_j.
	\end{equation}
\end{lemma}
\begin{proof}
	Choose $\delta=0$ in \eqref{est_1} and take the total expectation. Let $v_k := \sqrt{\E[\nor{x_{k} - \bar{x}}^2]}$ and note that, by Jensens's inequality, $\E[u_k]\leq v_k$. Then, denoting $\Delta_0:= \frac{2}{\lambda}\Big(1  - \frac{\bar{\alpha}d(A+\lambda B)}{l} \Big)$, we get
   \begin{equation*}
		v_{k+1}^2 - v_{k}^2 + \Delta_0 \alpha_k \E[\nor{\nabla f_k}^2] \leq W_k + \rho_k v_k.
	\end{equation*}
	Summing the previous inequality from $0$ to $k$ we derive
	\begin{equation*}
		v_{k+1}^2 \leq S_k  + \sum\limits_{j=0}^k\rho_j v_j.
	\end{equation*}
	Now we observe that $\rho_k$ is non-negative, $S_k$ is non-decreasing and $S_0 \geq v_0^2$. We derive \eqref{eq:bih0} by the (discrete) Bihari's Lemma \cite[Lemma 9.8]{kozak2021zeroth}.
\end{proof}

\begin{theorem}\label{th_convex}
	Let Assumptions  \ref{ass:lam_smooth}, \ref{ass:pk_exp} and \ref{ass:fun_conv} hold and suppose also that $\alpha_k\leq \bar{\alpha}$ with $\bar{\alpha} d (A+\lambda B)/ l <1$. Let $(x_k)$ be the random sequence generated by Algorithm~\ref{alg:szo} and $(\hat{x}_k)$ the sequence of the ergodic iterates. Then, for every $\delta \in \left(0, 1-\bar{\alpha}d (A+\lambda B)/l\right]$ and for every $\bar{x} \in \argmin f$, we have
\begin{align}\label{eq:prerate00}
&\E[f(\hat{x}_k) - f^*]  \leq \frac{1}{2 \delta \sum\limits_{j=0}^k\alpha_j} \left[S_k+  \sum\limits_{j=0}^k \rho_j \left(\sqrt{S_{j}} + \sum\limits_{i=0}^{j}\rho_i\right)\right],
\end{align}
where the sequences $W_k, \rho_k$ and $S_k$ are defined as in \eqref{eq:defck} and \eqref{eq:bih1}.
\end{theorem}

\begin{proof}
     Using the same notation as in the proof of Proposition~\ref{prop:fej}, see \eqref{eq:defck}, 
     it follows from equation~\eqref{est_1} that
 \begin{equation}\label{eq:est_2}
		\begin{aligned}
			\E_{(P_k, Z_k)}[u_{k + 1}^2 | \mathcal{H}_k] - u_k^2 + & 2 \delta\alpha_k(f_k-f^*)  \leq W_k+\rho_k u_k.
		\end{aligned}
	\end{equation}  
 Taking the total expectation and denoting again $v_k := \sqrt{\E[\nor{x_k - x^*}^2]}$, we get
\begin{equation*}
\begin{aligned}
			v_{k + 1}^2  - v_k^2 + & 2\delta \alpha_k\E(f_k-f^*)  \leq W_k+\rho_k v_k.
		\end{aligned}
\end{equation*}
Summing from $0$ to $k$, we have
\begin{align*}
	v_{k+1}^2  + 2\delta\sum\limits_{j=0}^k\alpha_j \E[(f_j - f^*)] \leq v_0^2 + \sum\limits_{j=0}^k W_j+ \sum\limits_{j=0}^k \rho_j v_j.
\end{align*}
Equation \eqref{eq:bih0} of Lemma~\ref{lem:bih} and $v_{k+1}^2\geq 0$ yield
\begin{equation}\label{eq:prerate}
2\delta\sum\limits_{j=0}^k\alpha_j \E[(f_j - f^*)] \leq  S_k +  \sum\limits_{j=0}^k \rho_j \left(\sqrt{S_{j}} + \sum\limits_{i=0}^{j}\rho_i\right).
\end{equation}
Moreover, by convexity, 
\begin{equation*}
	\E[f(\hat{x}_k) - f^*]  \leq  \frac{\sum\limits_{j=0}^k\alpha_j \E[f(x_j) - f^*]}{\sum\limits_{j=0}^k\alpha_j},
\end{equation*}
which concludes the proof.
\end{proof}

\subsection{Proof of Corollary \ref{cor:conv}}\label{app:proof_cor_conv}
Notice that, for both cases $A+\lambda B >0$ with $\bar{\alpha}=l\eta /[d (A+\lambda B)]$ and $A+\lambda B =0$ with $\bar{\alpha}>0$, the step-size sequence satisfies $\alpha_k\leq \bar{\alpha}$ with $\bar{\alpha} d (A+\lambda B) /l<1$. So the assumptions of Theorem \ref{th_convex} hold and we have that, for every $\delta \in (0,1-\bar{\alpha} d (A+\lambda B)/l]$,
\begin{equation*}
        \begin{aligned}
        \E[f(\hat{x}_k) - f^*]  &\leq \frac{1}{2\delta {\sum_{j=0}^k\alpha_j}} \Bigg[ S_k + \sum\limits_{j=0}^k \rho_j \Bigg( \sqrt{S_j}  +\sum\limits_{i=0}^{j} \rho_i \Bigg) \Bigg].            
        \end{aligned}
    \end{equation*}
Choose $\delta =1-d (A+\lambda B)\bar{\alpha}/l$, namely $\delta=1-\eta$ if $A+\lambda B>0$ and $\delta=1$ if $A+\lambda B =0$. In both cases, $\delta$ is a constant independent from $d, l, \bar{\alpha}$ and $\bar{h}$. By the assumptions, we know that the sequences $a_k^2\xi_k^2$, $a_k^2$ and $a_k \xi_k$ are summable. In particular there exists a constant $c>0$, depending only on $a_k$ and $\xi_k$, such that  
    \begin{equation*}
        \begin{aligned}
            \sum\limits_{j = 0}^k W_j &\leq c \left(\frac{\lambda^2 d^3 \bar{\alpha}^2 \bar{h}^2}{2l^2} +  \frac{2G d\bar{\alpha}^2}{l}\right);\\
            \sum\limits_{j = 0}^k \rho_j &\leq c \frac{\lambda d\sqrt{d} \bar{\alpha} \bar{h}}{l};\\
            S_k &\leq \|x_0 - \bar{x}\|^2 + c\left( \frac{\lambda^2 d^3 \bar{\alpha}^2 \bar{h}^2}{2l^2} + \frac{2G d\bar{\alpha}^2}{l}\right);\\
            \sum\limits_{j = 0}^k \rho_j \sqrt{S_j} &\leq c \left( \|x_0 -\bar{x}\| \frac{\lambda d\sqrt{d} \bar{\alpha} \bar{h} }{l} + \frac{\lambda^2 d^3 \bar{\alpha}^2 \bar{h}^2}{\sqrt{2}l^2}+ \frac{\sqrt{2} \lambda \sqrt{G} d^2 \bar{\alpha}^2 \bar{h}}{l \sqrt{l}}\right);\\
            \sum\limits_{j = 0}^k \rho_j \sum\limits_{i = 0}^j \rho_i &\leq c \frac{\lambda^2 d^3 \bar{\alpha}^2 \bar{h}^2}{l^2},
        \end{aligned}
    \end{equation*}
 from which we get the inequality in \eqref{ineqqq}.
    Now consider the case $A+\lambda B>0$, in which $\bar{\alpha}=l \eta/[d(A+\lambda B)]$, where $0<\eta<1$ is an adimensional constant. Then, for some adimensional constant $\tilde{c}>0$, we have that
    \begin{equation*}
        \begin{aligned}
        \E[f(\hat{x}_k) - f^*]  &\leq \frac{\tilde{c} d (A+\lambda B)}{l {\sum_{j=0}^k a_j}} \Bigg[ \|x_0 - \bar{x}\|^2 + \frac{\lambda^2 d \bar{h}^2}{(A+\lambda B)^2} +  \frac{G l}{d(A+\lambda B)^2}\\
        &+ \|x_0 -\bar{x}\| \frac{\lambda \bar{h} \sqrt{d}}{(A+\lambda B)} + \frac{\lambda \sqrt{G} \sqrt{l} \bar{h}}{ (A+\lambda B)^2}\Bigg],
        \end{aligned}
    \end{equation*}
    from which we conclude the inequality in \eqref{ineq1}.\\
    \ \\
    Now consider the case $A+\lambda B=0$ and $\bar{\alpha}=\sqrt{l/d}$. Starting again from \eqref{ineqqq}, we conclude the inequality in \eqref{ineq2}.

\section{Non-convex case} \label{app:nonconv}
In this section we prove the results under the PL condition.
\subsection{Proof of Theorem~\ref{thm:abg_pl_rates}}
\label{app:nonconv1}
\begin{proof}
    We start from inequality in Proposition~\ref{prop:abg_inst_fun_dec}. Using the assumption $\alpha_k \leq \bar{\alpha}$, we have that, for every $k\in \N$,
\begin{equation*}
        \begin{aligned}        
           \left[\E_{(P_k, Z_k)} [f_{k+1}|\mathcal{H}_k] - f^*\right] \ \leq \ & \left(1 + \frac{\lambda d A}{l}\alpha_k^2\right) \left[f_k - f^*\right] -\frac{\alpha_k}{2} \Big( 1-\frac{2\lambda d B}{l}\alpha_k \Big) \nor{\nabla f_k}^2 + C_k\\
           \leq \ & \left(1 + \frac{\lambda d A}{l}\bar{\alpha}\alpha_k\right) \left[f_k - f^*\right] -\frac{\alpha_k}{2} \Big( 1-\frac{2\lambda d B}{l}\bar{\alpha} \Big) \nor{\nabla f_k}^2 + C_k.
        \end{aligned}
	\end{equation*}
The assumption on $\bar{\alpha}$ implies that $2d\lambda B \bar{\alpha}/l \leq 1$. Using Assumption \ref{ass:pl}, we get

\begin{equation*}
        \begin{aligned}        
           \left[\E_{(P_k, Z_k)} [f_{k+1}|\mathcal{H}_k] - f^*\right] \leq \left(1 + \frac{\lambda d A}{l}\bar{\alpha}\alpha_k\right) \left[f_k - f^*\right]   -\frac{\alpha_k}{2} \Big( 1-\frac{2\lambda d B}{l}\bar{\alpha} \Big) \gamma \left[f_k - f^*\right] + C_k.
        \end{aligned}
	\end{equation*}
 Taking the full expectation and rearranging the terms, we conclude the claim.
\end{proof}
\subsection{Proof of Corollary \ref{cor:nonconv}}\label{app:nonconv2}
  By Theorem \ref{thm:abg_pl_rates},
     \begin{equation*}
        \begin{aligned}        
           \E \left[f_{k+1}\right] - f^* \leq & \left[1-\alpha_k \Big( \frac{\gamma}{2}-\frac{d\lambda\left(A+\gamma B\right)}{l}\bar{\alpha} \Big) \right] \left[\E \left[f_k\right] - f^*\right]\\
           &+ \frac{\lambda  d}{l} \alpha_k^2 G+ \alpha_k h_k^2\Big(\frac{\lambda d }{2\sqrt{l}}\Big)^2\Big(\frac{1}{2}+\frac{\lambda d}{l}\alpha_k\Big).
        \end{aligned}
	\end{equation*}
    Since $\alpha_k \leq \bar{\alpha}$,
     \begin{equation*}
        \begin{aligned}        
           \E \left[f_{k+1}\right] - f^* \leq & \left[1-\alpha_k \Big( \frac{\gamma}{2}-\frac{d\lambda\left(A+\gamma B\right)}{l}\bar{\alpha} \Big) \right] \left[\E \left[f_k\right] - f^*\right]\\
           &+ \frac{\lambda  d}{l} \alpha_k^2 G+ \alpha_k h_k^2\Big(\frac{\lambda d }{2\sqrt{l}}\Big)^2\Big(\frac{1}{2}+\frac{\lambda d}{l} \bar{\alpha}\Big).
        \end{aligned}
	\end{equation*}
    Set  $C_1 = \left(2\gamma - \frac{d \lambda (A + \gamma B)}{l} \bar{\alpha} \right)\bar{\alpha}$ and $C_2 = \left( \frac{\lambda d \bar{\alpha}^2 G}{l} + \left( \frac{\lambda d}{2\sqrt{l}} \right)^2 \left[ 1 + \frac{\lambda d \bar{\alpha}}{l}\right]\bar{\alpha} h^2 \right)$. Then
     \begin{equation*}
        \begin{aligned}        
           \E \left[f_{k+1}\right] - f^* \leq & \left[1- \frac{C_1}{k^{\theta}} \right] \left[\E \left[f_k\right] - f^*\right] + \frac{C_2}{k^{2\theta}},
        \end{aligned}
	\end{equation*}
     The statement then follows by the Chung's Lemma \cite[Lemma 4]{chung}.
\section{Experiment Details}\label{app:experiment_details}\label{app:changing_l}
Here we report details on the experiments performed. We implemented every script in Python3 and  used scikit-learn~\cite{scikit-learn}, NumPy~\cite{harris2020array} and PyTorch~\cite{NEURIPS2019_9015} libraries.
\subsection{Synthetic experiments}\label{app:syn_exp_details}
In this appendix, we provide details about synthetic experiments presented in Section~\ref{sec:experiements}. We performed four experiments on functions that satisfy Assumptions \ref{ass:fun_conv} and/or \ref{ass:pl}. Specifically, we considered a strongly convex, a PL convex, a convex and a PL non-convex function. The  dimension of the domain is set to $d = 100$ for strongly convex (F1), PL convex (F2), and PL non-convex (F3) targets. In particular, these functions are defined in Table \ref{tab:syn_fun}.
\begin{table}[h!]
	\caption{Target function used in synthetic experiments.}
	\label{tab:syn_fun}	
	\centering
	\begin{tabular}{l l l}
		
		\toprule
		
		Name & Function  & Details\\
		
		\midrule
		F1 & $f(x) = (1/d) \nor{A x}^2$ & $A \in \R^{d \times d}$ full rank\\
		F2 & $f(x) = (1/d) \nor{A x}^2$ & $A \in \R^{d \times d}$ rank deficient\\
		F3 & $f(x) = (1/d) \nor{A x}^2 + 3\sin^2 (c^\intercal x)$ & $A \in \R^{d \times d}$ and $Ac = c$\\
		
		\hline
	\end{tabular}
\end{table}
Both three functions have $x^* = [0, \cdots, 0]^\intercal \in \argmin f$. Moreover, for every function, the matrix $A$ is built with random normal entries i.e. $A_{i,j} \sim \mathcal{N}(0, 1)$ for every $i,j$. 
In these experiments, at each time step $k$, Algorithm~\ref{alg:szo} observes a $z_k$ which represents a row of the matrix $A$ (in all the three cases) and performs the iteration described in %
equation \eqref{eqn:szo}.
For direct-search methods, we used the sufficient decrease condition (instead of the simple decrease condition for STP\cite{stp}) with forcing function $\rho(\alpha) := 10 \alpha^2 \nor{p}^2$, where $p$ is the direction selected. Moreover, opportunistic polling is used i.e. the algorithms do not check any other poll directions as soon as the sufficient decrease condition is satisfied. The initial step-size is set to $\alpha_0 = 1.0$ and for STP, the step-size is decreased as $\alpha_k = \alpha_0/(k + 1)$. Expansion and contraction factors are set to $2$ and $0.9$ respectively. The maximum step size is set to $20$ and the minimum to $10^{-3}$ for the strongly convex and PL convex functions and to $20^{-10}$ for the non-convex target. For S-SZD, we summarize the choice of the parameters in Table \ref{tab:sszd_params_illustrative}.  
For the first experiment of Section \ref{sec:experiements} (i.e. Figure \ref{fig:changing_l_1}), we set $\alpha_k$ and  $h_k$  as $5 \times 10^{-3} \frac{l}{d} k^{-(1/2 + 10^{-10})}$ and $ 10^{-7}/\sqrt{k}$ respectively.
\begin{table}[ht]
	\caption{Parameter of S-SZD for illustrative experiments.} \label{tab:sszd_params_illustrative}
	\centering
	\begin{tabular}{l l l}
		\toprule
		Function  &  stepsize $\alpha_k$  & discretization $h_k$\\
		\midrule
		F1 & $0.3 \frac{l}{d} k^{-(1/2 + 1e-10)} $ & $10^{-7} k^{-(1/2 + 1e-10)}$ \\
		F2 & $0.4 \frac{l}{d} k^{-(1/2 + 1e-10)} $& $10^{-7} k^{-(1/2 + 1e-10)}$ \\
		F3 & $0.5 \frac{l}{d} k^{-(1/2 + 1e-10)} $ & $10^{-7} k^{-(1/2 + 1e-10)}$ \\
		\hline
	\end{tabular}
\end{table}

\subsection{Falkon tuning experiments details}\label{app:flk_exp_details}
In this appendix, we describe how datasets are preprocessed, split and how parameters of the optimizers are selected to perform Falkon experiments. As indicated in Section~\ref{sec:experiements}, we focus on tuning kernel and regularization parameters. %
Given a dataset $D = \{(x_i, y_i)\}_{i=1}^n$ where $y_i = f(x_i) + \epsilon_i$, we want to approximate $f$ with a surrogate $f_w$ which depends on some parameters $w$. To avoid overfitting,  we split the dataset in two, the first part including the data from 1 to $n_1$ (training) and the second one including the remaining points (validation). The training part (composed of the $80\%$ of the points) is used to build $f_w$. The validation part (composed of the remaining $20\%$ of the points) is used to choose hyperparameters $\lambda$ and $\sigma=(\sigma_1,\ldots,\sigma_d)$. Specifically, the problem we want to solve is the following
\begin{equation}\label{eqn:flk_ho0}
    w_{\lambda,\sigma}^* \in \argmin\limits_{w \in W} \sum\limits_{i = 1}^{n_1} (f_w(x_i,\sigma) - y_i)^2 + \lambda \|w\|^2 
    \end{equation}
    \begin{equation}\label{eqn:flk_ho}
    (\lambda^*,\sigma^*) \in \argmin\limits_{\lambda \in \R_{+},\,\sigma\in \R^d_+} \sum\limits_{i = n_1+1}^{n} (f_{w^{*}_{\lambda,\sigma}}(x_i,\sigma) - y_i)^2 + \lambda \|w^{*}_{\lambda,\sigma}\|^2 
\end{equation}
The problem we solve using S-SZD is the one to select $\sigma$ and $\lambda$.
In experiments in Section \ref{sec:experiements}, we modelled $f_w$ with Falkon \cite{rudi2017falkon,falkonlibrary2020}. 
\subsection*{Preprocessing}
Before training Falkon, data are standardized to zero mean and unit standard deviation and, for binary classification problems, labels are set to be $-1$ and $1$. After preprocessing, datasets are split into training and test parts. %
Specifically, for each dataset, we used $80\%$ as training set (use to tune parameters) and $20\%$ as test set (used to evaluated the model) - see Table \ref{tab:datasets}. %
\subsection*{Falkon Parameters}
Falkon has different parameters: a kernel function, a number of points (\Nystrom{} centers) used to compute the approximation and a regularization parameter. We used a Gaussian kernel $k$ with many length-scale parameters $\sigma_1,\cdots,\sigma_d$ with $d$ number of features of the dataset which is defined as follows
\begin{equation*}
	k(x, x^\prime) = e^{- \frac{1}{2}x \Sigma^{-1} x^{\prime}} \qquad \text{with} \qquad \Sigma = \diag{(\sigma^2_1,\cdots, \sigma^2_d)}.
\end{equation*}  
\Nystrom{} centers are uniformly randomly selected and the number of centers $M$ is fixed as the square root of the number of points in the training part %
while the other parameters are optimized.
Falkon library \cite{falkonlibrary2020,falkonhopt2022} used can be found at the following URL: \url{https://github.com/FalkonML/falkon}. Code for Direct search methods is inspired by the implementation proposed in~\cite{prob_ds_sketched} which can be downloaded at the following URL: \url{https://github.com/lindonroberts/directsearch}.

\begin{table}[h]
	\caption{Datasets used to perform FALKON tuning experiments and S-SZD parameters.}	
	\label{tab:datasets}\label{tab:flk_prms}
	\centering
	\begin{tabular}{lccccc}
		
		\toprule
		
		Name     &  $d$ & Training size & Test size & $\alpha_k$ & $h_k$\\
		
		\midrule
		
		HTRU2 \cite{htru2,Dua:2019} & 8 & 14316 & 3580 & $15 \frac{l}{d} k^{-(1/2 + 10^{-10})}$ & $10^{-3}/\sqrt{k}$\\
		CASP \cite{Dua:2019} & 9  & 36583 & 9146 &  $50 \frac{l}{d}k^{-(1/2 + 10^{-10})}$ & $10^{3}/\sqrt{k}$\\
		California Housing & 8 & 16512 & 4128 &  $50 \frac{l}{d} k^{-(1/2 + 10^{-10})}$ & $10^{-5}/\sqrt{k}$\\
		\hline
	\end{tabular}
\end{table}

\subsection*{Parameters of optimizers} For DS optimizers, we used $2$ as expansion factor and $0.5$ as contraction factor. %
Maximum and minimum stepsize are is set to $100$ and $10^{-9}$. %
The initial step size is set to $1$ for CASP dataset, $5$ for California Housing and $10$ for HTRU2. The number of directions used for direct-search methods is $2$. For reduced ProbDS \cite{prob_ds_sketched}, we used sketched matrices with size $d / 2$. %
Direction matrices of S-SZD are generated s.t. Assumption~\ref{ass:pk_exp} hold - see Appendix \ref{app:dir_gen}. The choice of parameters is summarized in Table \ref{tab:flk_prms}. %
In Table~\ref{tab:flk_tuning_te}, we report the mean and standard deviation of test error, obtained with Falkon tuned with different optimizers.
 \begin{table}[ht]
 	\caption{Mean $\pm$ standard deviation of test error (MSE) over 5 runs.}
	
 	\label{tab:flk_tuning_te}
	
 	\centering
	
 	\begin{tabular}{lccc}
		
 		\toprule
		
 		Algorithm & HTRU2 & CASP  &  California Housing \\
		
 		\midrule
		
 		STP \cite{stp} & $0.4246 \pm 0.0353$ & $0.1072 \pm 0.0090$ &  $0.0688 \pm 0.0023$    \\
		SCD  & $0.7680 \pm 0.0146$ & $0.1220 \pm 0.0051$ &  $0.0660 \pm 0.0048$    \\
		Deterministic FD & $0.4056 \pm 0.0694$ & $0.0840 \pm 0.0001$ & $0.0587 \pm 0.0007$\\
		Ghadimi \& Lan \cite{ghadimi_lam} & $0.7958 \pm 0.0028$ & $0.1211 \pm 0.0050$ & $0.0648 \pm 0.0020$\\
		Duchi et al \cite{sto_md} & $0.3987 \pm 0.1040$ & $0.0840 \pm 0.0001$ & $0.0589 \pm 0.0005$\\
        Flaxman et al \cite{flaxman2005online} & $0.8059 \pm 0.0007$ & $0.1047 \pm 0.0054$ & $0.0641 \pm 0.0018$\\
        Berahas et al \cite{berahas2022theoretical} & $0.3938 \pm 0.0989$ & $0.0840 \pm 0.0002$ & $0.0586 \pm 0.0005$\\
 		ProbDS spherical \cite{prob_ds}  & $0.3745 \pm 0.0069$ & $0.0877 \pm 0.0034$ & $0.0591 \pm 0.0014$  \\
 		ProbDS orthogonal \cite{prob_ds} & $0.3786 \pm 0.0092$  & $0.0862 \pm 0.0023$  &  $0.0587 \pm 0.0006$\\
 		ProbDS-RD spherical \cite{prob_ds_sketched}   & $0.3735 \pm 0.0062$  & $0.1005 \pm 0.0107$ & $0.0584 \pm 0.0005$  \\	
 		ProbDS-RD orthogonal \cite{prob_ds_sketched}     & $0.3757 \pm 0.0070$  & $0.0869 \pm 0.0027$  & $0.0587 \pm 0.0010$ \\
 		S-SZD $(l = d/2)$   & $0.3726 \pm 0.0340$ & $0.0852 \pm 0.0012$ & $0.0598 \pm 0.0011$  \\		
 		S-SZD $(l = d)$     & $0.3835 \pm 0.0355$ & $0.0840 \pm 0.0001$ & $0.0587 \pm 0.0008$ \\
		
 		\hline
		
 	\end{tabular}
	
 \end{table}
From Table~\ref{tab:flk_tuning_te}, we  observe that Falkon with parameters tuned with S-SZD obtains similar performances  to the one obtained using with DS methods (except STP).

\subsection{Comparison with Variance Reduced Methods}
In this appendix, we compare Algorithm \ref{alg:szo} with ZO-SVRG-Coord \cite{NEURIPS2018_ba9a56ce} and ZO-SVRG-Rand-Coord \cite{pmlr-v97-ji19a}. These two algorithms exploit variance reduction technique to improve rates. However, these methods can be used only if the target function $f$ is a finite-sum of functions. We compare the performance of these algorithms in solving a strongly convex, a PL convex and a PL non convex optimization problems described in Appendix \ref{app:syn_exp_details} considering the dimension of the search space $d = 50$. We also propose the following heuristic stepsize (imp stepsize) for S-SZD: given $\tau, \delta > 0$ and $\alpha_1 > \alpha_0 > 0$
\begin{equation}\label{app:imp_step}
    \alpha_k = \left\{ \begin{array}{cc}
        \alpha_0 k^{-(1/2 + \delta)} & \text{ if }k < \tau  \\
         \alpha_1 k^{-(1/2 + \delta)} & 
    \end{array}
    \right.
\end{equation}
We fix a budget of $100000$ function evaluations. We repeated the experiments $10$ times and we reported the mean and the standard deviation.
\begin{figure}[H]
    \centering
    \includegraphics[width=0.8\linewidth]{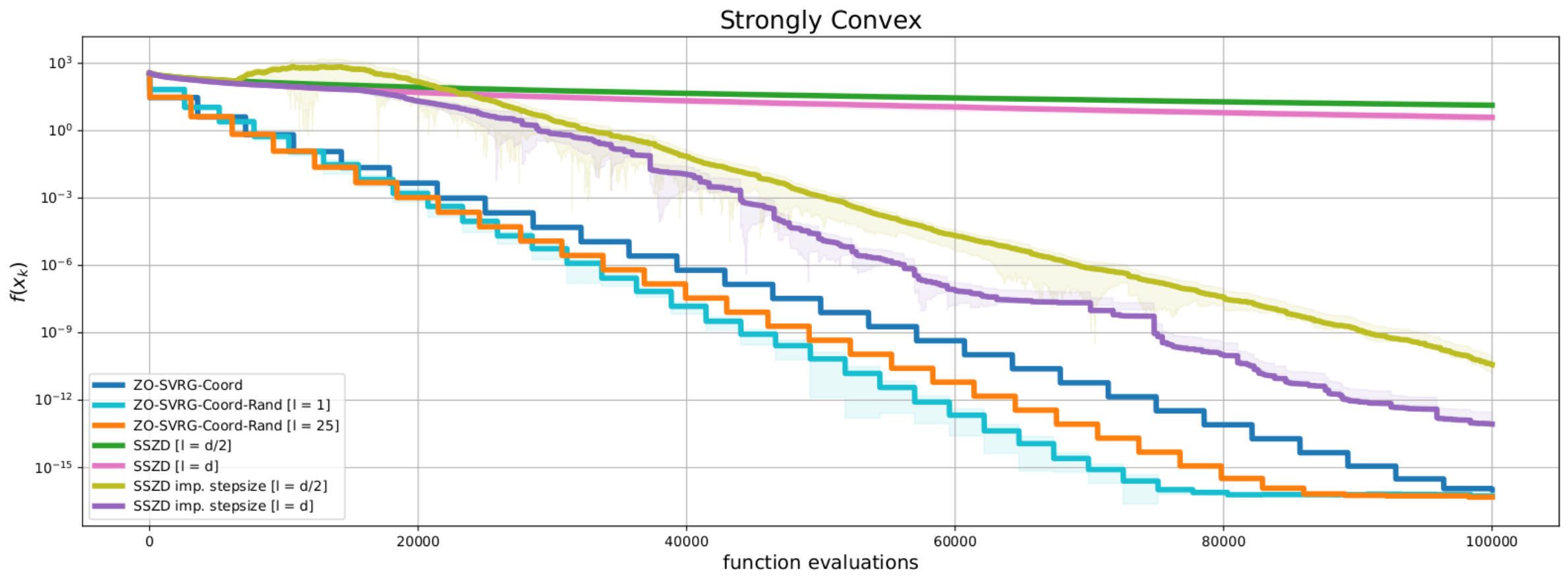}
    \includegraphics[width=0.8\linewidth]{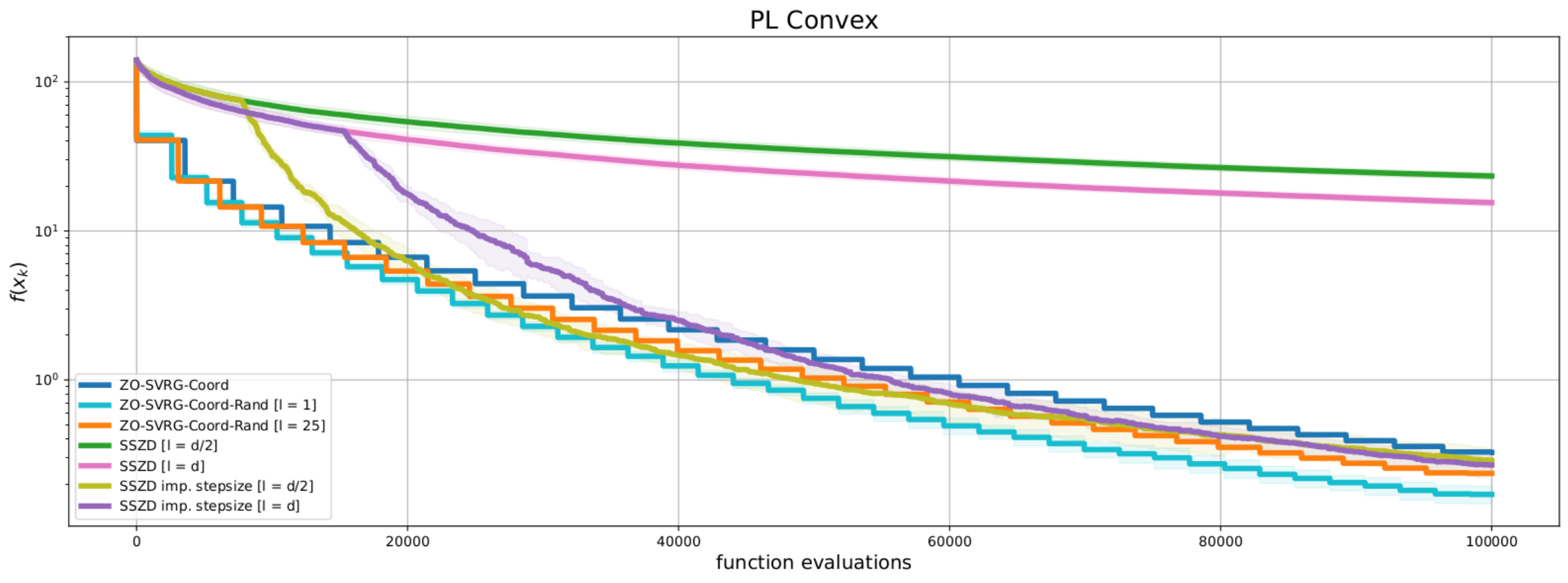}
    \includegraphics[width=0.8\linewidth]{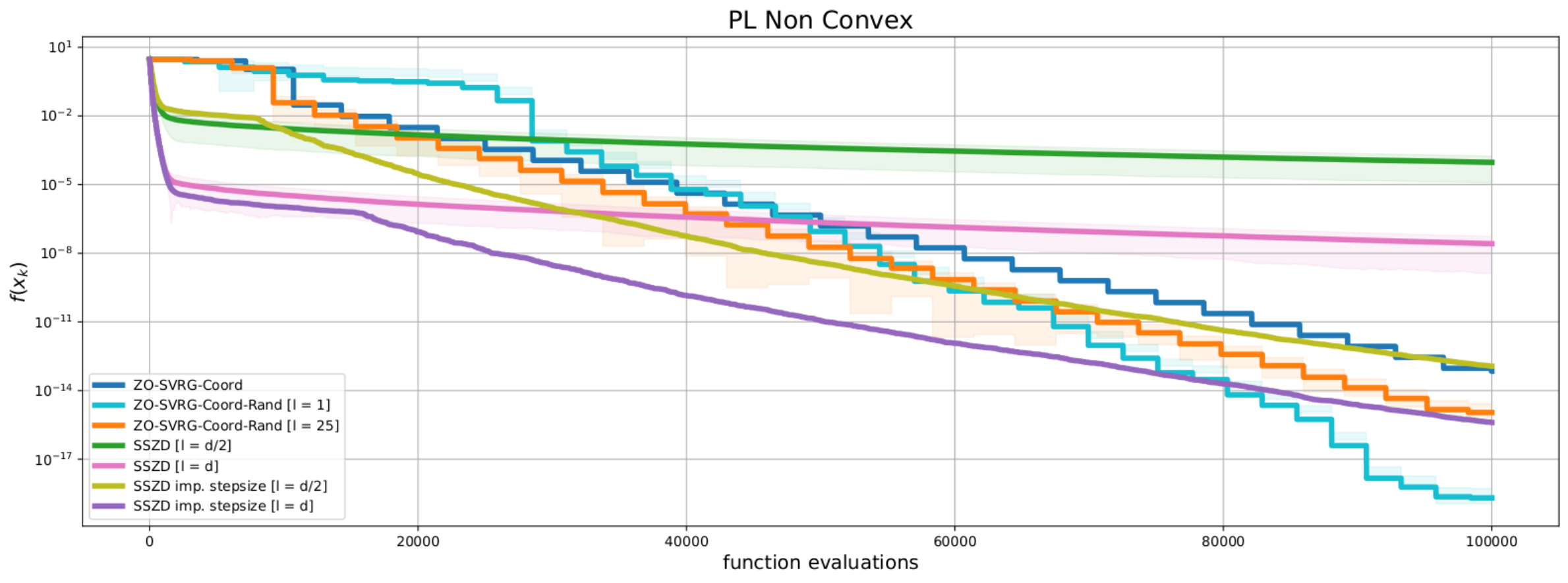}    
    \caption{Comparison of SSZD with zeroth order variance reduced methods.}
    \label{fig:vr_comp}
\end{figure}
\noindent In Figure \ref{fig:vr_comp}, we observe that both ZO-SVRG-Coord and ZO-SVRG-Coord-Rand perform better than S-SZD, as expected. However, these two algorithms are applicable only for the finite sum of functions while S-SZD can be used for facing the more general problem \eqref{eqn:problem}. Moreover, we observe that the step size proposed in eq. \eqref{app:imp_step} empirically improves the performance of S-SZD. However, such a choice for the stepsize is an heuristic and introduces more parameters to tune. Notably, in the PL convex setting, S-SZD achieves results similar to variance-reduced methods.

\subsection{Strategies to build direction matrices}\label{app:dir_gen}
We show here two examples of how to generate direction matrices $P_k$ for Algorithm~\ref{alg:szo}. Specifically, we can have \textit{coordinate} directions or \textit{spherical} directions.\\
{\it Coordinate Directions.} The easiest way to build a direction matrix which satisfies Assumption~\ref{ass:pk_exp} consists in taking the identity matrix $I \in \R^{d \times d}$ and sample $l$ columns uniformly at random (without replacement). Then for each sampled column, we flip the sign (we multiply by $-1$) with probability $1/2$ and we multiply the matrix by $\sqrt{d/l}$.\\
{\it Spherical Directions.} A different way to build a direction matrix satisfying Assumption~\ref{ass:pk_exp}, consists in taking directions orthogonal and uniform on a sphere \cite{sph_orth}. The algorithm consists in generating a matrix $Z \in \R^{d \times d}$ s.t. $Z_{i,j} \sim \mathcal{N}(0, 1)$ and computing the QR-decomposition. 
Then, the matrix $Q$ is \textit{truncated} by taking the first $0 < l \leq d$ directions and multiplied by $\sqrt{d/l}$.

\section{Limitations}\label{app:limitations}
In this appendix, we discuss the main limitations of Algorithm~\ref{alg:szo}. A limitation of S-SZD consists in requiring multiple function evaluations to perform steps. Indeed, in many contexts, evaluating the function can be very time-expensive (e.g., running simulations). In such contexts, to get good performances in time, we would need to reduce $l$. However, as we observe in Section \ref{sec:experiements}, we get worse performances. %
Another limitation is in the choice of the parameters. Indeed, in order to achieve convergence, we cannot choose constant step-size and discretization parameters but we have to tune sequence of parameters. To address this limitation, one approach is to introduce and investigate adaptive parameter selection methods, such as line-search algorithms. Moreover, in many practical scenarios, the stochastic objective function cannot be evaluated at different $x$ with the same $z$ (e.g., bandit feedback \cite{frazier2018tutorial}). %
Also note that if the sequence $h_k$ decreases too fast we can have numerical instability in computing \eqref{eqn:surrogate}.

\bibliographystyle{plain}
\bibliography{bibliography}%

\end{document}